\numberwithin{equation}{section}
\theoremstyle{definition}
\newtheorem{ex}{Exercise}[section]
\newtheorem*{qus}{Question}
\theoremstyle{plain}
\newtheorem{thm}{Theorem}
\newtheorem{prop}[ex]{Proposition}
\theoremstyle{remark}
\newtheorem{rmk}[ex]{Remark}
\begin{document}
		\title{Heisenberg uncertainty principle and its analogues in higher dimension: via Wigdersons' method}
			\author{Yiyu Tang }
			\date{Laboratoire d'analyse et de math\'ematiques appliqu\'ees, Universit\'e Gustave Eiffel}
		\maketitle
		\begin{abstract}
		The following question was proposed by Avi Wigderson and Yuval Wigderson in \cite{refww}: Is it possible to use the method in \cite{refww} to prove Heisenberg uncertainty principle in higher dimension $\mathbb{R}^d$, and get the correct dependence of the constant on $d$? We answer this question affirmatively, and also prove some generalizations of Heisenberg uncertainty principle in $\mathbb{R}^d$ via Wigdersons' method.
		\end{abstract}
	
\section{Introduction}
\paragraph{}Recall the definition of Schwartz class:
\begin{equation*}
	\mathcal{S}(\mathbb{R}^d):=\{f\in C^\infty(\mathbb{R}^d;\mathbb{C}):\rho_{\alpha,\beta}(f)\text{ is finite for all }\alpha=(\alpha_i)_{i=1}^d,\beta=(\beta_i)_{i=1}^d\in\mathbb{N}^d\},
\end{equation*}
where $\rho_{\alpha,\beta}(f)=\sup_{x\in\mathbb{R}^d}|x^\alpha\partial^\beta f(x)|$. Here $x^\alpha=x_1^{\alpha_1}x_2^{\alpha_2}\cdots x_d^{\alpha_d}$, and $\partial^\beta=\partial_1^{\beta_1}\partial_{2}^{\beta_2}\cdots\partial_d^{\beta_d}$. 
\par For a Schwartz function $f\in\mathcal{S}(\mathbb{R}^d)$, its Fourier transform $\hat{f}$ is defined by $\hat{f}(\xi):=\int_{\mathbb{R}^d}f(x)\mathrm{e}^{-2\pi\mathrm{i}x\cdot\xi}\mathrm{d}x$, where $x\cdot\xi$ is the inner product in $\mathbb{R}^d$. The Heisenberg uncertainty principle states that for any $f\in\mathcal{S}(\mathbb{R}^d)$
\begin{equation}\label{Heisenberg uncertainty principle}
	\bigg(\int_{\mathbb{R}^d}|x|^2|f(x)|^2\mathrm{d}x\bigg)\cdot	\bigg(\int_{\mathbb{R}^d}|\xi|^2|\hat{f}(\xi)|^2\mathrm{d}\xi\bigg)\geq\frac{d^2}{16\pi^2}||f||^2_{L^2(\mathbb{R}^d)}\cdot||\hat{f}||^2_{L^2(\mathbb{R}^d)},
\end{equation}
and the equality holds if and only if $f$ has the form $f(x)=C\mathrm{e}^{-\pi\delta|x|^2}$, where $\delta>0$.
\par The classical proof of (\ref{Heisenberg uncertainty principle}) is to write
\begin{equation*}
	d||f||^2_{L^2}=\int_{\mathbb{R}^d}f(x)\overline{f(x)}\sum_{j=1}^d\frac{\partial}{\partial x_j}x_j\mathrm{d}x,\text{ where }x=(x_1,x_2,...,x_d).
\end{equation*}
For $x\in\mathbb{R}^d$, we denote $ |x|:=\sqrt{|x_1|^2+\cdots+|x_d|^2}$, and $\langle\cdot,\cdot\rangle_{\mathbb{R}^d}$ is the inner product in $\mathbb{R}^d$. Using integration by parts, Cauchy-Schwarz inequality, and Plancherel identity:
\begin{equation*}
\begin{aligned}
	d||f||^2_2&=2\cdot\mathrm{Re}\bigg(-\int_{\mathbb{R}^d}\langle\nabla f,\overline{xf}\rangle_{\mathbb{R}^d}\mathrm{d}x\bigg)\\
	&\leq2\bigg|-\int_{\mathbb{R}^d}\langle\nabla f,\overline{xf}\rangle_{\mathbb{R}^d}\mathrm{d}x\bigg|\\
	&\leq2\int_{\mathbb{R}^d}|\nabla f|\cdot|\overline{xf}|\mathrm{d}x\\
	&\leq2\bigg(\int_{\mathbb{R}^d}|\nabla f|^2\mathrm{d}x\bigg)^\frac{1}{2}\cdot \bigg(\int_{\mathbb{R}^d}|x|^2|f|^2\mathrm{d}x\bigg)^\frac{1}{2}\\
	&=2\bigg(\int_{\mathbb{R}^d}\sum_{j=1}^d|\widehat{\partial_jf}(\xi)|^2\mathrm{d}\xi\bigg)^\frac{1}{2}\cdot \bigg(\int_{\mathbb{R}^d}|x|^2|f|^2\mathrm{d}x\bigg)^\frac{1}{2}\\
\end{aligned}
\end{equation*}
Finally, just recall a special property of Fourier transform  $\widehat{\partial_jf}(\xi)=2\pi\mathrm{i}\cdot\xi_j\hat{f}(\xi)$. 
\subsection{An overview of Wigdersons' method}
\paragraph{}In \cite{refww}, A.Wigderson and Y.Wigderson suggested a new method to give a unified version of various uncertainty principles. This method leads to a very different proof of  (\ref{Heisenberg uncertainty principle}) in dimension $1$. Comparing with the classical proof, their method did not use any special property of Fourier transform, so it can be generalized to prove analogues of uncertainty principle for other operators. In this paper, we only focus on the Fourier transform operator. 
\par Now we give a quick review of Wigdersons' proof of (\ref{Heisenberg uncertainty principle}) in dimension $1$. Let's begin with a basic inequality: $||\hat{f}||_{L^\infty}\leq||f||_{L^1}$. Replacing $f$ by $\hat{f}$, we have $||f||_{L^\infty}\leq||\hat{f}||_{L^1}$. Multiplying these two inequalities gives us 
\begin{equation}\label{Primary uncertainty principle}
	\frac{||f||_1\cdot||\hat{f}||_1}{||f||_\infty\cdot||\hat{f}||_\infty}\geq1,\text{ where }f\in\mathcal{S}(\mathbb{R}^1)\setminus\{0\}.
\end{equation}
We call (\ref{Primary uncertainty principle}) the \textit{primary uncertainty principle}.
\par Now for $1<p<\infty$, we have an obvious inequality: $||f||^p_p\leq||f||^{p-1}_\infty||f||_1$, this is just because $|f(x)|^p\leq||f||_\infty^{p-1}|f(x)|$. Multiplying both sides by $||f||_1^{p-1}$, we get
\begin{equation}\label{p-primary, before simplification}
	||f||_1^{p-1}||f||_p^p\leq||f||_\infty^{p-1}||f||_1^{p}.
\end{equation}
The inequality (\ref{p-primary, before simplification}) implies that
\begin{equation}\label{p-primary uncertainty principle}
	\frac{||f||_1}{||f||_p}\geq\bigg(\frac{||f||_1}{||f||_\infty}\bigg)^\frac{p-1}{p},\text{ and also }	\frac{||\hat{f}||_1}{||\hat{f}||_p}\geq\bigg(\frac{||\hat{f}||_1}{||\hat{f}||_\infty}\bigg)^\frac{p-1}{p}\text{ after replacing }f\text{ by }\hat{f}. 
\end{equation}
\par Define $V(f):=\int_\mathbb{R}|x|^2|f|^2$, our goal is to prove that there exists a constant $C>0$ independent of $f$, so that
\begin{equation*}
	V(f)V(\hat{f})\geq C||f||^2_2||\hat{f}||^2_2.
\end{equation*}
Now we choose $T:=\frac{1}{8}\big(\frac{||f||_1}{||f||_2}\big)^2$, by Cauchy-Schwarz inequality,
\begin{equation*}
	\int_{-T}^T|f|=\int_{-T}^T1\cdot|f|\leq\sqrt{2T}\cdot\bigg(\int_\mathbb{R}|f|^2\bigg)^\frac{1}{2}=\frac{1}{2}||f||_1.
\end{equation*}
The above inequality is equivalent to $\frac{1}{2}||f||_1\leq\int_{|x|>T}|f|$. For the part $|x|>T$, another application of Cauchy-Schwarz inequality yields that
\begin{equation}\label{Cauch-Schwarz for |x|>T}
	\begin{aligned}
	\frac{1}{2}||f||_1\leq\int_{|x|>T}|f|\leq&\bigg(\int_{|x|>T}\frac{\mathrm{d}x}{|x|^2}\bigg)^\frac{1}{2}\bigg(\int_{|x|>T}|x|^2|f|^2\mathrm{d}x\bigg)^\frac{1}{2}\\
	\leq&\sqrt{\frac{2}{T}}\sqrt{V(f)}.
	\end{aligned}
\end{equation}
After a simplification, we get
\begin{equation}\label{primary UP for V(f)/||f||^2_2}
\frac{V(f)}{||f||_2^2}\geq\frac{1}{64}\bigg(\frac{||f||_1}{||f||_2}\bigg)^4.
\end{equation}
Replacing $f$ by $\hat{f}$ and taking a multiplication, we get,  by (\ref{p-primary uncertainty principle}) and (\ref{Primary uncertainty principle}),
\begin{equation}\label{Heisenberg_new proof}
	\frac{V(f)}{||f||_2^2}\cdot\frac{V(\hat{f})}{||\hat{f}||_2^2}\geq\frac{1}{64^2}\bigg(\frac{||f||_1||\hat{f}||_1}{||f||_2||\hat{f}||_2}\bigg)^4\geq\frac{1}{64^2}\bigg(\frac{||f||_1||\hat{f}||_1}{||f||_\infty||\hat{f}||_\infty}\bigg)^2\geq\frac{1}{64^2}.
\end{equation}
\begin{rmk}
	Comparing with the original proof, this new proof seems longer. However, we should point out that, in the classical proof, we use the identity $\widehat{\partial_jf}(\xi)=2\pi\mathrm{i}\cdot\xi_j\hat{f}(\xi)$, which is a special property of Fourier transform, and not shared by other operators. 
	\par In the new proof, essentially, we only use the property that Fourier transform and its inverse are $L^1\rightarrow L^\infty$ bounded, and this boundedness property is less restrictive. So the new proof can be generalized to many other operators, for example, if a linear operator $L:\mathcal{S}(\mathbb{R})\rightarrow\mathcal{S}(\mathbb{R})$ satisfies
	\begin{equation*}
		||L||_{L^1\rightarrow L^\infty}\leq1,\ ||L^\star Lf||_{\infty}\geq||f||_\infty,
	\end{equation*}
then we also have a Heisenberg-type uncertainty principle for $L$:
\begin{equation*}
	V(f)V(Lf)\geq C||f||_2^2||\hat{f}||_2^2.
\end{equation*}
For more details, see \cite{refww}, section 4.2 and 4.3.
\end{rmk}

\begin{rmk}
Comparing with the optimal bound $\frac{1}{16\pi^2}$, in (\ref{Heisenberg_new proof}), the constant is $1/64^2$, which is much smaller than the optimal bound. It seems that this method can not be used to prove the sharp bound, since we do not use full properties of Fourier transform.
\end{rmk}

\subsection{Questions proposed by A.Wigderson and Y.Wigderson}
\paragraph{}In \cite{refww}, the authors observed that this method can not be used to prove Heisenberg uncertainty principles in higher dimension $d\geq2$ directly. This is because the integral
$$\int_{x\in\mathbb{R}^d:|x|>T}\frac{\mathrm{d}x}{|x|^2}$$ is divergent. They asked if this convergence issues represented a real limitation of the new approach, or if there was a way around them. More precisely, they asked the following questions:
\begin{qus}[\cite{refww}, page 255, Open problem]
	Is it possible to prove the Heisenberg uncertainty principle in $\mathbb{R}^d$ via the new method? That's to say, could one use this new approach to prove the existence of an absolute constant $C=C(d)>0$ independent of $f$, so that
	\begin{equation*}
		\frac{V(f)}{||f||_2^2}\frac{V(\hat{f})}{||\hat{f}||^2_2}\geq C,\text{ for all }f\in\mathcal{S}(\mathbb{R}^d)\setminus\{0\}?
	\end{equation*}
Here $V(f):=\int_{\mathbb{R}^d}|x|^2|f(x)|^2\mathrm{d}x$. \par Moreover, we already knew that the best constant $C$ is $\frac{d^2}{16\pi^2}$. Could one use this new approach to obtain the correct dependence on the dimension? That's to say, is it possible to prove the existence of an absolute constant $C^\prime$, which is independent of $f$ and $d$, so that
	\begin{equation*}
	\frac{V(f)}{||f||_2^2}\frac{V(\hat{f})}{||\hat{f}||^2_2}\geq C^\prime d^2,\text{ for all }f\in\mathcal{S}(\mathbb{R}^d)\setminus\{0\}?
\end{equation*}
\end{qus}
\subsection{Main theorems}
\paragraph{}The following theorem answers the above two questions affirmatively.
\begin{thm}\label{L2 case}
	Via Wigdersons's method, it's possible to prove that for every $f\in\mathcal{S}(\mathbb{R}^d)\setminus\{0\}$, we have
	\begin{equation*}
		\frac{V(f)}{||f||_2^2}\frac{V(\hat{f})}{||\hat{f}||^2_2}\geq C d^2,
	\end{equation*}
where $C$ is an absolute constant independent of $f$ and $d$, for example, one may take $C=\frac{1}{10^{10}}$.
\end{thm}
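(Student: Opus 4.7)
The plan is to reduce Theorem~\ref{L2 case} to Wigderson's one-dimensional bound~(\ref{Heisenberg_new proof}) by slicing $f$ along each coordinate axis. The obstruction flagged in the excerpt---that $\int_{|x|>T}|x|^{-2}\,dx$ diverges in dimension $d\geq 2$---does not arise under this strategy, because the only inverse-square integration used in Wigderson's argument is always performed in a single direction on the line.

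Fix $j\in\{1,\dots,d\}$, write $x=(x_j,\underline{x}_j)$ with $\underline{x}_j\in\mathbb{R}^{d-1}$, and let $g_{\underline{x}_j}(t):=f(x_1,\dots,t,\dots,x_d)\in\mathcal{S}(\mathbb{R})$ be the slice of $f$ in direction $j$. I will apply~(\ref{Heisenberg_new proof}) slicewise to $g_{\underline{x}_j}$, take square roots, integrate over $\underline{x}_j$ (which, by Fubini, equates $\int ||g_{\underline{x}_j}||_{L^2(\mathbb{R})}^2\,d\underline{x}_j$ with $||f||_{L^2(\mathbb{R}^d)}^2$), and then combine a Cauchy--Schwarz in $\underline{x}_j$ with Plancherel in the remaining $d-1$ variables to match the partial Fourier transform in $x_j$ with the full Fourier transform $\hat f$. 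The outcome is a directional uncertainty estimate
\[
\frac{1}{64}\,||f||_{L^2(\mathbb{R}^d)}^2 \;\leq\; \sqrt{V_j(f)\,V_j(\hat{f})},\qquad V_j(h):=\int_{\mathbb{R}^d}x_j^2\,|h(x)|^2\,dx,
\]
one such estimate for each $j\in\{1,\dots,d\}$.

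Summing these $d$ inequalities and applying Cauchy--Schwarz a second time, now in the discrete index $j$, produces the factor of $d^2$:
\[
\frac{d}{64}\,||f||_2^2 \;\leq\; \sum_{j=1}^d \sqrt{V_j(f)\,V_j(\hat{f})} \;\leq\; \sqrt{\sum_{j=1}^d V_j(f)}\,\sqrt{\sum_{j=1}^d V_j(\hat{f})} \;=\; \sqrt{V(f)\,V(\hat{f})}.
\]
Squaring and using Plancherel $||f||_2=||\hat{f}||_2$ yields $V(f)V(\hat{f})/(||f||_2^2\,||\hat{f}||_2^2)\geq d^2/64^2$, which comfortably beats the advertised $C=10^{-10}$. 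The only point requiring genuine care is the Fubini/Plancherel interchange between the 1D Fourier transform in $x_j$ and the $(d-1)$-dimensional one in $\underline{x}_j$; since $f\in\mathcal{S}(\mathbb{R}^d)$ implies $g_{\underline{x}_j}\in\mathcal{S}(\mathbb{R})$ and the integrands involved are all in $L^1\cap L^2$, this is routine. Conceptually, the role of the two successive Cauchy--Schwarz applications---once in the continuous variable $\underline{x}_j$, once in the discrete index $j$---is precisely to manufacture the $d^2$ dependence from a 1D inequality whose constant $1/64^2$ needs no sharpening.
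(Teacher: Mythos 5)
Your proof is correct, but it takes a genuinely different route from the paper. You slice into coordinate directions, apply the one-dimensional bound~(\ref{Heisenberg_new proof}) to each slice $g_{\underline{x}_j}$, and reassemble with Fubini, partial Plancherel, and two Cauchy--Schwarz steps (one continuous in $\underline{x}_j$, one discrete in $j$); this delivers $V(f)V(\hat f)/(\|f\|_2^2\|\hat f\|_2^2)\geq d^2/64^2$, a much better constant than the paper's $d^2/10^{10}$. The paper instead adapts the threshold-and-split mechanism \emph{directly} in $\mathbb{R}^d$: it replaces the pair $(L^1,L^2)$ by $(L^a,L^2)$ with the dimension-dependent exponent $a=\tfrac{2(d+1)}{d+3}$, so that the troublesome integral becomes $\int_{|x|>T}|x|^{-(d+1)}\,dx<\infty$, chooses $T$ and $c_d$ so that exactly half the $L^a$-mass sits outside $|x|\leq T$, and then controls the growth of the resulting constant as $d\to\infty$ via Stirling asymptotics for $v_d$ and $\omega_{d-1}$. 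What the paper's version buys in exchange for the worse constant is fidelity to the open problem's actual intent---it demonstrates that the Wigdersons' splitting technique \emph{itself} survives in $\mathbb{R}^d$---and it avoids the tensor-product factorization $\mathcal{F}=\mathcal{F}_1\otimes\cdots\otimes\mathcal{F}_d$ on which your reassembly step relies; that factorization and the accompanying partial Plancherel identity are special features of the Fourier transform, whereas a principal selling point of the Wigdersons' framework (see the remark following~(\ref{Heisenberg_new proof})) is that it should extend to any $L^1\to L^\infty$-bounded operator $L$, for which no such coordinate slicing is available. In short: your argument shows that the $d$-dimensional theorem follows from the one-dimensional one by classical tensorization, while the paper shows that the primary method extends intrinsically to $\mathbb{R}^d$.
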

Now we consider a generalization of Heisenberg uncertainty principle. For $p\in(1,\infty)$, we define	
\begin{equation*}
	V_p(f):=\int_{\mathbb{R}^d}|x|^p|f(x)|^p\mathrm{d}x.
\end{equation*}
Then $V_p(f)=V(f)$ when $p=2$, and we also have a Heisenberg-type inequality for $V_p$ when $p$ is ``small". This is the following theorem: 
\begin{thm}\label{LA case}
	Suppose that $d\geq1$.
\begin{itemize}
	\item If $1<p<\frac{2d}{d-1}$, then via Wigdersons's method, it's possible to prove that there exists a constant $C=C(d,p)$, so that
	\begin{equation*}
		\frac{V_p(f)}{||f||_p^p}\frac{V_p(\hat{f})}{||\hat{f}||^p_p}\geq C(d,p), \text{ for every } f\in\mathcal{S}(\mathbb{R}^d)\setminus\{0\}.
	\end{equation*}
	Moreover, if $1<p\leq 2$ is fixed, then as $d$ goes to infinity, the growth of $C(d,p)$ is $C_1(p)\cdot d^p$, where $C_1(p)$ is a constant independent of the dimension $d$, and up to a dimension-free constant $C_1(p)$, the growth $d^p$ is sharp.
	\item If $\frac{2d}{d-1}<p<\infty$, then 
	\begin{equation*}
		\inf_{f\in\mathcal{S}(\mathbb{R}^d)\setminus\{0\}}\frac{V_p(f)}{||f||_p^p}\frac{V_p(\hat{f})}{||\hat{f}||^p_p}=0.
	\end{equation*}
	So there is no uncertainty type theorem when $p>\frac{2d}{d-1}$.
\end{itemize}
\end{thm}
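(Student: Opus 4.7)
My plan is to carry out the Wigderson cutoff argument of Section~1.1 directly in $\mathbb{R}^d$, with the $L^p$ norms in place of the $L^2$ ones. Fix $T>0$ and split
\[
\|f\|_p^p \;=\; \int_{|x|\leq T}|f(x)|^p\,dx + \int_{|x|>T}|f(x)|^p\,dx.
\]
The tail is always at most $T^{-p}V_p(f)$. For the bulk, rather than use the crude $|f|\leq\|f\|_\infty$ (which, after optimizing $T$, would leave a residual $\|f\|_p/\|f\|_\infty$ that has no useful lower bound through the Fourier variables), I would apply H\"older with an auxiliary exponent $q>p$:
\[
\int_{|x|\leq T}|f|^p \;\leq\; (c_d T^d)^{1-p/q}\|f\|_q^p, \qquad c_d := |B_1(\mathbb{R}^d)|.
\]
Optimizing $T$ to balance the two terms yields
\[
\frac{V_p(f)}{\|f\|_p^p}\;\geq\;C(p,q)\,c_d^{-p/d}\left(\frac{\|f\|_p}{\|f\|_q}\right)^{p^2 q/(d(q-p))}.
\]
By Stirling, $c_d^{-p/d}\asymp(d/(2\pi e))^{p/2}$, so multiplying this with the analogous inequality for $\hat f$ produces a factor $(c_d^{-p/d})^2\asymp d^p$, supplying the claimed dimensional growth.

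The crucial step is choosing $q$ so that the residual ratio is controlled by a Fourier-side inequality. For $1<p\leq 2$ the right choice is $q=p'$: then $q\geq 2\geq p$, the exponent collapses to $p^2/(d(2-p))$, and the two Hausdorff–Young inequalities $\|\hat f\|_p\geq\|f\|_{p'}$ and $\|f\|_p\geq\|\hat f\|_{p'}$ multiply to
\[
\frac{\|f\|_p\,\|\hat f\|_p}{\|f\|_{p'}\,\|\hat f\|_{p'}}\;\geq\;1.
\]
Consequently $V_p(f)V_p(\hat f)/(\|f\|_p^p\|\hat f\|_p^p)\geq C_1(p)\,d^p$. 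To check the sharpness claim I would test against $f(x)=e^{-\pi|x|^2}$: since $\hat f=f$, each factor equals the $p$-th absolute moment of a Gaussian measure on $\mathbb{R}^d$, and the ratio $\Gamma((d+p)/2)/\Gamma(d/2)$ together with Stirling gives $V_p(f)/\|f\|_p^p\asymp d^{p/2}$, so the product is indeed $\asymp d^p$ up to a $p$-dependent, dimension-free constant. For the remaining range $2<p<2d/(d-1)$ (nonempty only for small $d$), the choice $q=p'$ is no longer admissible since then $p'<p$; one must instead select $q$ inside the narrow window allowed by the threshold $2d/(d-1)$ and pair it with a substitute Fourier inequality.

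For the second assertion ($p>2d/(d-1)$, infimum zero) the plan is to exhibit an explicit Schwartz family. A natural candidate is $f_n(x)=\eta(x/R_n)\,(1+|x|^2)^{-\alpha/2}$ with $\eta$ a smooth cutoff, $\alpha$ slightly above $1+d/p$, and $R_n\to\infty$ at a suitably tuned rate; a direct asymptotic computation shows that exactly when $p>2d/(d-1)$ both $V_p(f_n)/\|f_n\|_p^p$ and $V_p(\hat f_n)/\|\hat f_n\|_p^p$ can be simultaneously made small, so the product tends to $0$. The main obstacle I anticipate is the regime $2<p<2d/(d-1)$ in part one: the Hausdorff–Young scheme that succeeds for $p\leq 2$ must be replaced, and identifying the correct pair $(q,\text{Fourier inequality})$ that continues to work up to — but not beyond — $p=2d/(d-1)$ is precisely where the threshold enters as a sharp combinatorial limit on admissible H\"older exponents.
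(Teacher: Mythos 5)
Your decomposition is genuinely different from the paper's and works cleanly for $1<p<2$: you split $\|f\|_p^p$ into bulk plus tail and apply H\"older on the bulk against an $L^q$ norm with $q>p$, whereas the paper splits $\|f\|_a^a$ for an auxiliary $a<p$ and applies H\"older on the tail against $V_p(f)$. Both routes need the same Fourier-side inequality (the paper quotes it from \cite{refT}): if $1<a<p$ and $1/a+1/p\geq 1$ then $\|f\|_a\|\hat f\|_a\geq\|f\|_p\|\hat f\|_p$. In your labeling this requires $p<q\leq p'$, which is satisfiable only when $p<2$. In the paper's labeling it reads $1<a\leq p'$ and $a<p$; combined with the requirement that $\int_{|x|>T}|x|^{-as}\,dx$ converge (forcing $a>dp/(d+p)$), this produces exactly the threshold $p<\frac{2d}{d-1}$. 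Your route is therefore structurally capped at $p<2$ and cannot reach the stated threshold. You flag the gap yourself, but you also mischaracterize $(2,\frac{2d}{d-1})$ as ``nonempty only for small $d$'' --- it is nonempty for every $d\geq2$, merely shrinking toward $2$ as $d\to\infty$.

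The negative direction ($p>\frac{2d}{d-1}$) is a genuine gap in your proposal. With $\alpha$ slightly above $1+d/p$, your family $f_n(x)=\eta(x/R_n)(1+|x|^2)^{-\alpha/2}$ has $\|f_n\|_p^p$ and $V_p(f_n)$ both converging to finite nonzero limits as $R_n\to\infty$, so $V_p(f_n)/\|f_n\|_p^p$ stays bounded away from $0$; for other values of $\alpha$ this ratio tends to $+\infty$; and sending $R_n\to0$ only reproduces the scale invariance of the product. There is no rate to tune, and the Fourier side (a Bessel--MacDonald kernel smeared by a dilated bump) is not the ``direct asymptotic computation'' you suggest. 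The paper instead uses the Fourier self-dual family $g_c=c^{-d/2}e^{-\pi|x|^2/c^2}+c^{d/2}e^{-\pi c^2|x|^2}$, for which $g_c=\widehat{g_c}$ makes both factors identical and explicitly Gaussian; after the substitution $t=c^{dp/2-d}$ the ratio takes the form $(t^\alpha+t^{-\alpha})/(t+t^{-1})$ with $0\leq\alpha<1$ precisely when $p>\frac{2d}{d-1}$, and this tends to $0$ as $t\to\infty$. Your Gaussian test for the sharpness of the $d^p$ growth agrees with the paper's.
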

\begin{rmk}
	It seems that Wigdersons' method can not be dealt with the critical case $p=\frac{2d}{d-1}$. Also, this method does not work when $p=1$ or $p=\infty$. The three cases $p=1$, $p=\frac{2d}{d-1}$ and $p=\infty$ are unknown.
\end{rmk}

\paragraph{}Our final result deals with a theorem of Cowling-Price and its generalizations. Recall Cowling-Price uncertainty principle states that if $\theta,\phi\in(0,\infty)$ and $p,q\in[1,\infty]$, then
\begin{equation*}
		\Big|\Big||x|^\theta f\Big|\Big|_p\cdot\Big|\Big||\xi|^\phi\hat{f}\Big|\Big|_q\geq C||f||^2_2,\text{ for all }f\in\mathcal{S}(\mathbb{R})
\end{equation*}
if and only if
\begin{equation*}
\theta>\frac{1}{2}-\frac{1}{p},\ \phi>\frac{1}{2}-\frac{1}{q}\quad\text{and}\quad\theta+\frac{1}{p}=\phi+\frac{1}{q}.
\end{equation*}
For more details on this theorem, see \cite{Cowling-Price}, Theorem 5.1. Cowling-Price theorem was also discussed in \cite{Folland}, Lemma 3.3 and Theorem 3.6.
\par By using Wigdersons' method, we give another proof of Cowling-Price theorem (up to the endpoints) and its generalizations in higher dimension.
\begin{thm}\label{Cowling-Price theorem, d-dimension}
	Assume that $d\geq1$. Let $1<p,q<\infty$ and $0<\theta,\phi<\infty$, then there exists a constant $C$ so that
	\begin{equation}\label{Cowling-Price inequalitty, d-dimension}
		\Big|\Big||x|^\theta f\Big|\Big|_p\cdot\Big|\Big||\xi|^\phi\hat{f}\Big|\Big|_q\geq C||f||^2_2
	\end{equation}
	for all $f\in\mathcal{S}(\mathbb{R}^d)$, if and only if
	\begin{equation}\label{Cowling-Price d_dimension condition}
		\frac{\theta}{d}>\frac{1}{2}-\frac{1}{p},\quad\frac{\phi}{d}>\frac{1}{2}-\frac{1}{q}\quad \text{and}\quad\frac{1}{q}+\frac{\phi}{d}=\frac{1}{p}+\frac{\theta}{d}.
	\end{equation}
\end{thm}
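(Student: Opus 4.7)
My plan is to run the three-step Wigderson scheme in the weighted Cowling--Price setting, following the blueprint of Theorems~\ref{L2 case} and~\ref{LA case}. First I would establish a weighted Nash-type inequality: for any $g\in\mathcal{S}(\mathbb{R}^d)$, any $T>0$, and $\gamma:=\tfrac{1}{p}+\tfrac{\theta}{d}>\tfrac12$,
\[
\|g\|_2^2 \le c_d T^d\|g\|_\infty^2 + C_{p,\theta,d}\,T^{-2d(\gamma-1/2)}\||x|^\theta g\|_p^2,
\]
by bounding the inner ball trivially by $c_dT^d\|g\|_\infty^2$ and applying H\"older on $\{|x|>T\}$ with exponents $p/2$ and $p/(p-2)$ (for $p\ge 2$), or the same scheme at the level of $\|g\|_p^p$ followed by $\|g\|_2^2\le\|g\|_\infty^{2-p}\|g\|_p^p$ (for $p<2$). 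The tail integral $\int_{|x|>T}|x|^{-2\theta p/(p-2)}\,dx$ converges precisely under $\theta/d>1/2-1/p$. Optimising in $T$ yields $\|g\|_2^2 \le C\,\|g\|_\infty^{2-1/\gamma}\,\||x|^\theta g\|_p^{1/\gamma}$.

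By the Cowling--Price compatibility the same inequality applies to $\hat f$ with the \emph{identical} exponent $\gamma$; multiplying the two bounds and using Plancherel,
\[
\|f\|_2^4 \le C\,(\|f\|_\infty\|\hat f\|_\infty)^{2-1/\gamma}\,(\||x|^\theta f\|_p\,\||\xi|^\phi\hat f\|_q)^{1/\gamma}.
\]
To finish the sufficient direction it is enough to prove a weighted primary uncertainty principle $\|f\|_\infty\|\hat f\|_\infty \le C\,\||x|^\theta f\|_p\,\||\xi|^\phi\hat f\|_q$, since then the exponents $(2-1/\gamma)$ and $1/\gamma$ add to $2$. When $\gamma>1$ this is obtained by combining $\|f\|_\infty\le\|\hat f\|_1$ and $\|\hat f\|_\infty\le\|f\|_1$ with the $L^1$ variant of the first step, $\|g\|_1\le C\|g\|_\infty^{1-1/\gamma}\||x|^\theta g\|_p^{1/\gamma}$ (whose tail integral $\int_{|x|>T}|x|^{-\theta p'}\,dx$ converges precisely when $\gamma>1$): multiplying the two versions and dividing off the common factor $(\|f\|_\infty\|\hat f\|_\infty)^{1-1/\gamma}$ gives the claim.

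The \emph{main obstacle} is the regime $\gamma\in(1/2,1]$, where the $L^1$ Nash bound is no longer available. This is precisely the difficulty already met in Theorem~\ref{L2 case} (Heisenberg has $\gamma=\tfrac12+\tfrac1d$, lying in this range for every $d\ge 2$), and I would adapt the technique used there: replace the $L^1$ norm by $\|g\|_r$ for some $r$ just above $1/\gamma$ (which exists because $\gamma>\tfrac12$), use the first step to control $\|f\|_r$ and $\|\hat f\|_r$ in terms of the weighted norms and $L^\infty$ norms, and close the estimate by bootstrapping with Plancherel and a second application of the weighted Nash inequality. For the necessary direction, scaling $f(x)\mapsto f(\lambda x)$ multiplies the ratio $\||x|^\theta f\|_p\,\||\xi|^\phi\hat f\|_q/\|f\|_2^2$ by $\lambda^{(\theta+d/p)-(\phi+d/q)}$, so sending $\lambda\to 0$ or $\infty$ forces $\tfrac{1}{p}+\tfrac{\theta}{d}=\tfrac{1}{q}+\tfrac{\phi}{d}$; and the strict inequalities $\theta/d>\tfrac12-\tfrac1p$, $\phi/d>\tfrac12-\tfrac1q$ are tested against smoothly truncated power functions $f_\alpha(x)=\eta(x)|x|^{-\alpha}$ with $\alpha$ at the critical exponent, for which the asymptotic $\widehat{|x|^{-\alpha}}(\xi)\sim c|\xi|^{\alpha-d}$ makes the ratio vanish when either strict inequality fails.
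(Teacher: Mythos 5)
Your sufficiency scheme has the right global shape (Wigderson three-step with a weighted Nash bound plus a primary uncertainty principle), and you correctly diagnose that the hard regime is $\gamma=\tfrac1p+\tfrac\theta d\in(\tfrac12,1]$, which is exactly the obstruction that also appears in Theorem~\ref{L2 case}. But the resolution you sketch --- replacing $L^1$ by some $\|g\|_r$ with $r$ just above $1/\gamma$ and ``bootstrapping with Plancherel'' --- does not close, because after replacing $\|f\|_\infty\le\|\hat f\|_1$ by an $L^r$ variant there is no longer an $L^\infty$--$L^1$ duality to multiply and cancel against. The paper sidesteps $L^\infty$ entirely: it never compares against $\|g\|_\infty$, but instead bounds the intermediate norm $\|f\|_a$ for a carefully engineered $a=a(d,p,\theta,\epsilon)<\min(p,2)$, and then invokes Theorem~\ref{L^A primary UP, higher dimension} (an $L^a$-versus-$L^p$ primary uncertainty principle $\|f\|_a\|\hat f\|_a\ge\|f\|_p\|\hat f\|_p$, valid for $1<a<p$ and $\tfrac1a+\tfrac1p\ge1$, proved via Hausdorff--Young) to eliminate the $\|f\|_a$ factors after multiplying the $f$ and $\hat f$ bounds. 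The $\epsilon,\tilde\epsilon,\delta$ in Propositions~\ref{choice of parameters: Cowling-Price} and~\ref{definition of epsilon and tilde(epsilon)} are exactly calibrated so that this $a$ is the \emph{same} on both sides and the Hausdorff--Young condition holds; this calibration is the real work, and it is not a variant of an $L^1$ Nash bound. So the sufficiency direction as proposed has a genuine missing idea, not just a routine gap.

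The necessity direction is where the proposal would actually fail. Scaling for the homogeneity relation is fine and matches the paper. But testing with a single smoothly truncated power $f_\alpha(x)=\eta(x)|x|^{-\alpha}$ cannot produce a counterexample: for a power law one needs simultaneously $\alpha<\tfrac d2$ (to break $L^2$), $\alpha>\theta+\tfrac dp$ (to keep $|x|^\theta f\in L^p$), and, after Fourier transform $\hat f_\alpha\sim|\xi|^{\alpha-d}$ near $0$, $\alpha>d-\tfrac dq-\phi$. Using the homogeneity relation the last condition becomes $\alpha>d-\tfrac dp-\theta$, and when $\theta/d\le\tfrac12-\tfrac1p$ we have $d-\tfrac dp-\theta\ge\tfrac d2$, so the three requirements on $\alpha$ are contradictory. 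This is why the paper splits the necessity into two genuinely different constructions: at the \emph{boundary} $\theta/d=\tfrac12-\tfrac1p$ it uses Wainger's theorem to build $f\sim|x|^{-d/2}\log^{-1/2}(1/|x|)$, where the logarithmic correction is essential --- a pure power at the critical exponent makes both $\|f\|_2$ and $\||x|^\theta f\|_p$ diverge, while the log factor keeps $\||x|^\theta f\|_p$ finite (because $p>2$) yet leaves $\|f\|_2=\infty$; and in the \emph{strict-failure} case $\theta/d<\tfrac12-\tfrac1p$ it builds a Rudin--Shapiro sequence $f_k$ (via Proposition~\ref{Proposition: parallelogram law for 2^d numbers}) with $\|f_k\|_2^2=2^{dk}\|f\|_2^2$ growing strictly faster than $\||x|^\theta f_k\|_p\cdot\||\xi|^\phi\hat f_k\|_q\lesssim 2^{k(\theta+d/p)}\cdot 2^{dk/2}$, so the ratio tends to $0$ along the sequence. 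No single function does the job in the second case; the sequence is essential.
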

Take $d=1$, we get the Cowling-Price theorem except for two endpoints.
\paragraph{}The article is organized as follows:
\begin{itemize}
	\item In section \ref{Proof of Theorem L2 case}, we prove Theorem \ref{L2 case}. Recall that, when $d=1$, we use integral $\int_{|x|>T}1/|x|^2$ in (\ref{Cauch-Schwarz for |x|>T}). In $\mathbb{R}^d$, we will use the integral $\int_{|x|>T}1/|x|^{d+1}$, with a special choice of some parameters, see (\ref{choice of parameters in L2 case}). In the study of the behavior of the constant $C$ as $d$ goes to infinity, we will use an asymptotic formula of Gamma function frequently:
	\begin{equation*}
		\Gamma(x)\sim\sqrt{\frac{2\pi}{x}}\Big(\frac{x}{e}\Big)^x,\text{ when }x\rightarrow\infty.
	\end{equation*}
	\item In section \ref{proof of theorem L^A case}, we prove Theorem \ref{LA case}. The proof is essentially the same with the $L^2$ case, and we only need to change the choice of some parameters. In $L^2(\mathbb{R}^d)$ case, we use the property that $\int_{x\in\mathbb{R}^d:|x|\geq1}1/|x|^{d+1}<\infty$, and in $L^p(\mathbb{R}^d)$ case we will use the integral $\int_{x\in\mathbb{R}^d:|x|\geq1}1/|x|^{d+\epsilon}<\infty$, where $\epsilon>0$ is a parameter, see Proposition \ref{choice of parameters}.
	\item In section \ref{Proof of Cowling-Price}, we prove Theorem \ref{Cowling-Price theorem, d-dimension}. We will see that the real difficulties are the first two inequalities in (\ref{Cowling-Price d_dimension condition}). The conditions (\ref{Cowling-Price d_dimension condition}) imply the inequality (\ref{Cowling-Price inequalitty, d-dimension}) is parallel to the proof of Theorem \ref{LA case}. For another direction, there are two different cases, so we should consider them separately, see section \ref{Proof of Cowling-Price theorem:  Only if part}. In both cases, assuming that (\ref{Cowling-Price d_dimension condition}) fails, we will construct a function so that (\ref{Cowling-Price inequalitty, d-dimension}) does not hold. 
\end{itemize}
\section{Proof of Theorem \ref{L2 case}}\label{Proof of Theorem L2 case}
\paragraph{}In (\ref{primary UP for V(f)/||f||^2_2}), we proved that when $d=1$:
\begin{equation*}
	\frac{V(f)}{||f||_2^2}\geq\frac{1}{64}\bigg(\frac{||f||_1}{||f||_2}\bigg)^4.
\end{equation*}
With this inequalities in mind, now we try to prove an inequality for general $d$ of the following form:
\begin{equation*}
	\frac{V(f)}{||f||_{L^2(\mathbb{R}^d)}^2}\geq C(d)\bigg(\frac{||f||_{L^a(\mathbb{R}^d)}}{||f||_{L^2(\mathbb{R}^d)}}\bigg)^\kappa,
\end{equation*}
where $a$ and $\kappa$ are constants to be determined. Also, we should take the choice of $T$ into consideration.

\subsection{Choice of some parameters \& Sketch of the proof}
\paragraph{}We choose $a,r,s\geq1$ as following:
\begin{equation}\label{choice of parameters in L2 case}
	a=\frac{2(d+1)}{d+3},\ r=\frac{d+3}{d+1},\ s=\frac{d+3}{2}.
\end{equation}
Notice that $1\leq a<2$ and $1/r+1/s=1$. Also, define 
\begin{equation*}
	T:=c_d\cdot\bigg(\frac{||f||_a}{||f||_2}\bigg)^\frac{d+1}{d},
\end{equation*}
where $c_d$ is a constant to be determined later, and it's independent of $f$, see (\ref{choice of c_d}).
\paragraph{} Our goal is to prove, for every $f\in\mathcal{S}(\mathbb{R}^d)\setminus\{0\}$,
\begin{equation}\label{primary UP for V(f)/||f||^2_2, in dimension d}
	\begin{aligned}
		\frac{V(f)}{||f||_2^2}\geq\frac{1}{4}\bigg(\frac{c_d^2}{\omega_{d-1}^2}\bigg)^\frac{1}{d+1}\bigg(\frac{||f||_a}{||f||_2}\bigg)^{(2+\frac{2}{d})},
	\end{aligned}
\end{equation}
where $\omega_{d-1}$ is the area of $(d-1)$-sphere $\mathbb{S}^{d-1}$. We will prove this inequality in the next subsection.
\paragraph{} Assuming (\ref{primary UP for V(f)/||f||^2_2, in dimension d}), then replacing $f$ by $\hat{f}$, we have
\begin{equation*}
	\begin{aligned}
		\frac{V(\hat{f})}{||\hat{f}||_2^2}\geq\frac{1}{4}\bigg(\frac{c_d^2}{\omega_{d-1}^2}\bigg)^\frac{1}{d+1}\bigg(\frac{||\hat{f}||_a}{||\hat{f}||_2}\bigg)^{(2+\frac{2}{d})}.
	\end{aligned}
\end{equation*}
Taking a multiplication we get
\begin{equation*}
	\begin{aligned}
		\frac{V(f)}{||f||_2^2}\cdot\frac{V(\hat{f})}{||\hat{f}||_2^2}\geq\frac{1}{16}\bigg(\frac{c_d^2}{\omega_{d-1}^2}\bigg)^\frac{2}{d+1}\bigg(\frac{||f||_a||\hat{f}||_a}{||f||_2||\hat{f}||_2}\bigg)^{(2+\frac{2}{d})}.
	\end{aligned}
\end{equation*}
Let $a^\prime$ be the conjugate exponent of $a$. H\"older inequality implies that
\begin{equation*}
	||\hat{f}||_2=||f||_2\leq||f||^\frac{1}{2}_a||f||^\frac{1}{2}_{a^\prime}.
\end{equation*}
Also, when $1\leq a<2$, Hausdorff-Young inequality yields that $||\hat{f}||_{a^\prime}\leq||f||_a$, replacing $f$ by $\hat{f}$, we have $||f||_{a^\prime}\leq||\hat{f}||_a$, and
\begin{equation}\label{L^2 estimate for f and its fourier transform}
	\frac{||f||_a||\hat{f}||_a}{||f||_2||\hat{f}||_2}\geq\frac{||f||_a||\hat{f}||_a}{||f||_a||f||_{a^\prime}}=\frac{||\hat{f}||_a}{||f||_{a^\prime}}\geq1.
\end{equation}
Therefore,
\begin{equation*}
	\begin{aligned}
		\frac{V(f)}{||f||_2^2}\cdot\frac{V(\hat{f})}{||\hat{f}||_2^2}\geq\frac{1}{16}\bigg(\frac{c_d^2}{\omega_{d-1}^2}\bigg)^\frac{2}{d+1}.
	\end{aligned}
\end{equation*}
\paragraph{}Now only need to study the growth of the  quotient $\big(\frac{c_d}{\omega_{d-1}}\big)^\frac{2}{d+1}$, when $d$ goes to infinity. In subsection \ref{asymp of quotient, L^2 case}, we will prove that
\begin{equation*}
	\bigg(\frac{c_d}{\omega_{d-1}}\bigg)^\frac{2}{d+1}>\frac{1}{C}d,
\end{equation*}
where $C$ is a large constant independent of $d$, for example $C=100000$, and we eventually get
\begin{equation*}
	\begin{aligned}
		\frac{V(f)}{||f||_2^2}\cdot\frac{V(\hat{f})}{||\hat{f}||_2^2}\geq\frac{1}{C^\prime}d^2.
	\end{aligned}
\end{equation*}
Here one may take $C^\prime=10^{10}$.

\subsection{Proof of (\ref{primary UP for V(f)/||f||^2_2, in dimension d})}To prove (\ref{primary UP for V(f)/||f||^2_2, in dimension d}), we use H\"older inequality for the pair $r$ and $s$, notice that $ar=2$, $as=d+1$ and $1/s=(2-a)/2$ by our choice.
\begin{equation*}
	\begin{aligned}
	\int_{|x|\leq T}|f|^a\leq&\bigg(\int_{|x|\leq T}1^s\mathrm{d}x\bigg)^\frac{1}{s}\cdot \bigg(\int_{|x|\leq T}|f|^{ar}\bigg)^\frac{1}{r}\\
	=&(T^dv_d)^{\frac{1}{s}}\bigg(\int_{|x|\leq T}|f|^{2}\bigg)^\frac{a}{2}\\
	\leq&(T^dv_d)^{\frac{2-a}{2}}||f||_2^{a}
	\end{aligned},
\end{equation*}
where $v_d$ is the volume of the unite ball in $\mathbb{R}^d$.
\par The term $(T^dv_d)^{\frac{2-a}{2}}$ is 
\begin{equation*}
	(T^dv_d)^{\frac{2-a}{2}}=(c_d^dv_d)^{\frac{2-a}{2}}\bigg(\frac{||f||^{d+1}_a}{||f||^{d+1}_2}\bigg)^{\frac{2-a}{2}}
\end{equation*}
Notice that $\frac{(d+1)(2-a)}{2}=a$, so
\begin{equation*}
	\bigg(\frac{||f||^{d+1}_a}{||f||^{d+1}_2}\bigg)^{\frac{2-a}{2}}=\bigg(\frac{||f||_a}{||f||_2}\bigg)^a
\end{equation*}
Now we choose $c_d$ so that 
\begin{equation}\label{choice of c_d}
	(c_d^dv_d)^{\frac{2-a}{2}}=1/2.
\end{equation}
Particularly, the constant $c_d$ depends only on dimension $d$.
For this $c_d$, we have
\begin{equation*}
		\begin{aligned}
		\int_{|x|\leq T}|f|^a\leq\frac{1}{2}||f||_a^a,
	\end{aligned}
\end{equation*}
which is equivalent to $\int_{|x|\geq T}|f|^a\geq\frac{1}{2}||f||_a^a$.

\paragraph{}Another application of H\"older inequality gives
\begin{equation*}
	\begin{aligned}
		\int_{|x|\geq T}|f|^a=&\int_{|x|\geq T}\frac{1}{|x|^a}|x|^a|f(x)|^a\\
		\leq&\bigg(\int_{|x|\geq T}\frac{1}{|x|^{as}}\bigg)^\frac{1}{s}\cdot\bigg(\int_{|x|\geq T}|x|^{ar}|f(x)|^{ar}\bigg)^\frac{1}{r}\\
		=&\bigg(\int_{|x|\geq T}\frac{1}{|x|^{d+1}}\bigg)^\frac{1}{s}\cdot\bigg(\int_{|x|\geq T}|x|^{2}|f(x)|^{2}\bigg)^\frac{1}{r}\\
		=&\bigg(\frac{1}{T}\int_{|x|\geq 1}\frac{1}{|x|^{d+1}}\bigg)^\frac{1}{s}\cdot V(f)^\frac{1}{r}\\
	\end{aligned}
\end{equation*}
An easy calculation shows that $\int_{|x|\geq 1}\frac{1}{|x|^{d+1}}=\omega_{d-1}$. Therefore

\begin{equation*}
	\begin{aligned}
			\frac{1}{2}||f||_a^a\leq\int_{|x|\geq T}|f|^a\leq&\bigg(\frac{\omega_{d-1}}{T}\bigg)^\frac{2}{d+3}\cdot V(f)^\frac{d+1}{d+3}.\\
	\end{aligned}
\end{equation*}
After a simplification, we obtain
\begin{equation*}
	\begin{aligned}
		\frac{1}{2^{d+3}}\frac{||f||_a^{a(d+3)+\frac{2(d+1)}{d}}}{||f||_2^{\frac{2(d+1)}{d}+2(d+1)}}\leq\frac{\omega_{d-1}^2}{c_d^2}\bigg(\frac{V(f)}{||f||_2^2}\bigg)^{d+1}
	\end{aligned}
\end{equation*}
Notice that	$a(d+3)=2(d+1)$. So
\begin{equation*}
	\begin{aligned}
		\frac{\omega_{d-1}^2}{c_d^2}\bigg(\frac{V(f)}{||f||_2^2}\bigg)^{d+1}\geq	\frac{1}{2^{d+3}}\frac{||f||_a^{a(d+3)+\frac{2(d+1)}{d}}}{||f||_2^{\frac{2(d+1)}{d}+2(d+1)}}=\frac{1}{2^{d+3}}\bigg(\frac{||f||_a}{||f||_2}\bigg)^{(d+1)(2+\frac{2}{d})},
	\end{aligned}
\end{equation*}
taking a $\frac{1}{d+1}$-power we get
\begin{equation*}
	\begin{aligned}
		\frac{V(f)}{||f||_2^2}\geq\bigg(\frac{c_d^2}{\omega_{d-1}^2}\bigg)^\frac{1}{d+1}\frac{1}{2^{\frac{d+3}{d+1}}}\bigg(\frac{||f||_a}{||f||_2}\bigg)^{(2+\frac{2}{d})}\geq\frac{1}{4}\bigg(\frac{c_d^2}{\omega_{d-1}^2}\bigg)^\frac{1}{d+1}\bigg(\frac{||f||_a}{||f||_2}\bigg)^{(2+\frac{2}{d})}.
	\end{aligned}
\end{equation*}

\subsection{Asymptotic behavior of the quotient}\label{asymp of quotient, L^2 case}
\paragraph{}In this subsection, we study the growth of  $\big(\frac{c_d^2}{\omega_{d-1}^2}\big)^\frac{1}{d+1}$ when $d$ tends to infinity. Recall the choice of $c_d$, after a calculation we have
\begin{equation*}
	\begin{aligned}
		\bigg(\frac{c_d}{\omega_{d-1}}\bigg)^\frac{2}{d+1}=\frac{1}{(v_d)^\frac{2}{d(d+1)}2^\frac{d+3}{d(d+1)}\omega_{d-1}^\frac{2}{d+1}}
	\end{aligned}
\end{equation*}
\paragraph{}The first factor
\begin{equation*}
	\begin{aligned}
		\frac{1}{(v_d)^\frac{2}{d(d+1)}}=\frac{\Gamma(\frac{d}{2}+1)^\frac{2}{d(d+1)}}{\pi^\frac{1}{d+1}}\in\bigg[\frac{\Gamma(\frac{d}{2}+1)^\frac{2}{d(d+1)}}{\sqrt{\pi}},\Gamma(\frac{d}{2}+1)^\frac{2}{d(d+1)}\bigg]
	\end{aligned}
\end{equation*}
Recall the approximation of Gamma function when $x>0$ is large: $\Gamma(x)\sim\sqrt{\frac{2\pi }{x 	}}\big(\frac{x}{e}\big)^x$.
\begin{equation*}
	\begin{aligned}
\Gamma(\frac{d}{2}+1)^\frac{2}{d(d+1)}\sim&\Bigg(\sqrt{\frac{4\pi}{d+2}}\bigg(\frac{2+d}{2\mathrm{e}}\bigg)^{\frac{d}{2}+1}\Bigg)^\frac{2}{d(d+1)}\\
<&\Bigg(2\sqrt{\frac{\pi}{d+2}}\bigg(\frac{2+d}{\pi}\bigg)^{\frac{d}{2}+1}\Bigg)^\frac{2}{d(d+1)}\\
\leq&2\bigg(\frac{2+d}{\pi}\bigg)^{\frac{1}{d}}.
\end{aligned}
\end{equation*}
Notice that if $d\geq1$, then the following two inequalities hold: $\frac{2+d}{\pi}\leq d$, and $d^\frac{1}{d}\leq \mathrm{e}$. So we have
\begin{equation}\label{Asymptotic behavior of Gamma(1+d/2)}
	1\leq\Gamma(\frac{d}{2}+1)^\frac{2}{d(d+1)}\leq 2\mathrm{e}.
\end{equation}
The lower bound is trivial because $\Gamma(x)\geq1$ when $x\geq1$. Actually, we cheated a little bit, because the asymptotic formula $\Gamma(x)\sim\sqrt{\frac{2\pi }{x 	}}\big(\frac{x}{e}\big)^x$ holds only when $x$ is large. The inequality (\ref{Asymptotic behavior of Gamma(1+d/2)}) is true only when $d$ is large. However, this is really a minor problem, since we're only interested in the behavior of the constant when $d$ is large, and we may assume $d$ is already large at the very beginning.
\par These bounds implies that the first factor $\frac{1}{(v_d)^\frac{2}{d(d+1)}}$ lies in the interval $[\frac{1}{\sqrt{\pi}},2\mathrm{e}]$, which does not give a decay or growth on dimension $d$. Like the first factor, the 2nd factor also makes no contribution on the growth because
\begin{equation*}
	\frac{1}{4}\leq\frac{1}{2^\frac{d+3}{d(d+1)}}\leq1.
\end{equation*}
\paragraph{}Recall the surface area $\omega_{d-1}=\frac{2\pi^\frac{d}{2}}{\Gamma(\frac{d}{2})}$. So for the 3rd factor, we have
\begin{equation*}
	\begin{aligned}
		\frac{1}{\omega_{d-1}^\frac{2}{d+1}}=&\frac{\Gamma(\frac{d}{2})^\frac{2}{d+1}}{(2\pi^\frac{d}{2})^\frac{2}{d+1}}>\frac{\Gamma(\frac{d}{2})^\frac{2}{d+1}}{10}.
	\end{aligned}
\end{equation*}
Combining all three factors yields
\begin{equation*}
	\frac{1}{(v_d)^\frac{2}{d(d+1)}2^\frac{d+3}{d(d+1)}\omega_{d-1}^\frac{2}{d+1}}>\frac{1}{100}\Gamma(\frac{d}{2})^\frac{2}{d+1}.
\end{equation*}
Finally
\begin{equation*}
	\begin{aligned}
		\Gamma(\frac{d}{2})^\frac{2}{d+1}\sim\Bigg(\sqrt{\frac{4\pi}{d}}\bigg(\frac{d}{2e}\bigg)^\frac{d}{2}\Bigg)^\frac{2}{d+1}>\bigg(\frac{d}{2e}\bigg)^{1-\frac{2}{d+1}}=\frac{d}{2e}\bigg(\frac{d}{2e}\bigg)^{-\frac{2}{d+1}}>\frac{d}{2\mathrm{e}^3}.
	\end{aligned}
\end{equation*}
So
\begin{equation*}
	\bigg(\frac{c_d}{\omega_{d-1}}\bigg)^\frac{2}{d+1}>\frac{1}{10^5}d,
\end{equation*}
which implies that
\begin{equation*}
	\begin{aligned}
		\frac{V(f)}{||f||_2^2}\cdot\frac{V(\hat{f})}{||\hat{f}||_2^2}\geq\bigg(\frac{c^2_d}{\omega^2_{d-1}}\bigg)^\frac{2}{d+1}\geq\frac{1}{10^{10}}d^2.
	\end{aligned}
\end{equation*}

\section{$L^p(\mathbb{R}^d)$ case}\label{proof of theorem L^A case}
\paragraph{}Now we consider the general case $L^p(\mathbb{R}^d)$. Parallel to $L^2$, now we try to prove an inequality of the following form:
\begin{equation}\label{L_A estimate for a single function}
	\frac{V_p(f)}{||f||_{L^p(\mathbb{R}^d)}^p}\geq C(d)\bigg(\frac{||f||_{L^a(\mathbb{R}^d)}}{||f||_{L^p(\mathbb{R}^d)}}\bigg)^\kappa,
\end{equation}
where $a,\kappa$ are constants to be determined. Suppose that (\ref{L_A estimate for a single function}) holds, replacing $f$ by $\hat{f}$ and take a product, 
\begin{equation*}\label{L_A estimate for a function with its fourier transform}
		\frac{V_p(f)}{||f||_p^p}\cdot\frac{V_p(\hat{f})}{||\hat{f}||_p^p}\geq C(d)^2\Bigg(\frac{||f||_{a}||\hat{f}||_{a}}{||f||_p||\hat{f}||_p}\Bigg)^\kappa.
\end{equation*}
Recall (\ref{L^2 estimate for f and its fourier transform}), we proved $\frac{||f||_a||\hat{f}||_a}{||f||_2||\hat{f}||_2}\geq1$, when $a=2(d+1)/(d+3)$. However, here we should prove
\begin{equation*}
	\frac{||f||_{a}||\hat{f}||_{a}}{||f||_p||\hat{f}||_p}\geq1,
\end{equation*}
where $a$ is a constant to be chosen. This is already known:
\begin{thm}[\cite{refT}, Theorem 1.3.] \label{L^A primary UP, higher dimension}
	If $1<a<p<\infty$ and $\frac{1}{a}+\frac{1}{p}\geq1$, then
\begin{equation*}
	\frac{||f||_{a}||\hat{f}||_{a}}{||f||_p||\hat{f}||_p}\geq1,\text{ for every }f\in\mathcal{S}(\mathbb{R}^d)\setminus\{0\}.
\end{equation*}
\end{thm}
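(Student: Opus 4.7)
The plan is to derive the inequality by a combination of logarithmic convexity of $L^p$ norms (interpolation) together with Hausdorff--Young. First I would argue that the hypotheses force $a$ to lie in a regime where Hausdorff--Young applies: since $a<p$ and $\tfrac{1}{a}+\tfrac{1}{p}\geq 1$, we get $\tfrac{2}{a}>\tfrac{1}{a}+\tfrac{1}{p}\geq 1$, so $1<a<2$. Writing $a'$ for the conjugate exponent, the condition $\tfrac{1}{a}+\tfrac{1}{p}\geq 1$ is exactly $p\leq a'$, so in fact
\[
1<a<p\leq a',\qquad 1<a<2<a'<\infty.
\]

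Next I would interpolate $\|\cdot\|_p$ between $\|\cdot\|_a$ and $\|\cdot\|_{a'}$. Choose $\theta\in[0,1)$ with $\tfrac{1}{p}=\tfrac{\theta}{a}+\tfrac{1-\theta}{a'}$; log-convexity of $L^p$ norms gives
\[
\|f\|_p\leq\|f\|_a^{\theta}\|f\|_{a'}^{1-\theta},\qquad \|\hat f\|_p\leq\|\hat f\|_a^{\theta}\|\hat f\|_{a'}^{1-\theta}.
\]
Multiplying these two estimates,
\[
\|f\|_p\|\hat f\|_p\leq\bigl(\|f\|_a\|\hat f\|_a\bigr)^{\theta}\bigl(\|f\|_{a'}\|\hat f\|_{a'}\bigr)^{1-\theta}.
\]

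Finally I would kill the $a'$-factors by Hausdorff--Young. Since $1<a<2$, one has $\|\hat f\|_{a'}\leq\|f\|_a$, and applying the same estimate to $\hat f$ (using that the inverse Fourier transform satisfies the same bound) yields $\|f\|_{a'}\leq\|\hat f\|_a$. Hence $\|f\|_{a'}\|\hat f\|_{a'}\leq\|f\|_a\|\hat f\|_a$, and substituting this into the displayed inequality collapses both factors on the right-hand side to $\|f\|_a\|\hat f\|_a$, giving
\[
\|f\|_p\|\hat f\|_p\leq\|f\|_a\|\hat f\|_a,
\]
which is exactly the claim.

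The main obstacle is essentially bookkeeping: one has to check that the interpolation exponent $\theta$ lies in $[0,1)$ (which is automatic from $a<p\leq a'$) and that both applications of Hausdorff--Young are legal (which needs $a\leq 2$, itself forced by the hypothesis as noted above). Otherwise everything is a direct consequence of two standard inequalities, and no quantitative tracking of constants is required because the normalization of the Fourier transform used here makes the Hausdorff--Young constant equal to $1$.
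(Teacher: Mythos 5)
Your proof is correct and follows the same approach the paper attributes to its source (and spells out explicitly for the special case $p=2$ in display (1.11)): a log-convexity/H\"older interpolation of $\|\cdot\|_p$ between $\|\cdot\|_a$ and $\|\cdot\|_{a'}$, followed by two applications of Hausdorff--Young to bound the $a'$-norms of $f$ and $\hat f$ by the $a$-norms of $\hat f$ and $f$. The only difference from the paper's displayed $L^2$ argument is that you carry a general interpolation exponent $\theta$ instead of fixing $\theta=1/2$, which is exactly what is needed for arbitrary $p$, and your preliminary deductions $1<a<2$ and $a<p\leq a'$ from the hypotheses are correct.
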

This theorem was proved in \cite{refT} for $d=1$, but it can be generalized to general $d$ without any difficulties. We only used H\"older inequality and Hausdorff-Young inequality in the proof of Theorem \ref{L^A primary UP, higher dimension}, and both inequalities are true for general $d$.
\subsection{Choice of some parameters}
The following proposition and (\ref{choice of parameters in L^A case}) are substitutes of (\ref{choice of parameters in L2 case}) in $L^p$ case.
\begin{prop}\label{choice of parameters}
	For every $p\in(1,\frac{2d}{d-1})$, one can always find an $\epsilon>0$ so that
	\begin{equation*}
		\frac{(d+\epsilon)p}{d+\epsilon+p}>1,\text{ and } p\leq\frac{2(d+\epsilon)}{d-1+\epsilon}.
	\end{equation*}
\end{prop}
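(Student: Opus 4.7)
The plan is to convert each of the two conditions into an explicit linear constraint on $\epsilon$, and then verify that the resulting system is nonempty under the hypothesis $1<p<\frac{2d}{d-1}$. Clearing the (positive) denominator in the first inequality rewrites $\frac{(d+\epsilon)p}{d+\epsilon+p}>1$ as $(d+\epsilon)(p-1)>p$, which is the lower bound
\[
\epsilon > \frac{p}{p-1}-d =: L(p,d).
\]
Similarly, the second inequality $p\leq\frac{2(d+\epsilon)}{d-1+\epsilon}$ reduces to $\epsilon(p-2)\leq 2d-p(d-1)$, which splits according to the sign of $p-2$: it is automatic when $p=2$; it yields an upper bound $\epsilon\leq\frac{2d-p(d-1)}{p-2}=:U(p,d)$ when $p>2$; and it becomes a lower bound by a negative number (hence vacuous for positive $\epsilon$) when $1<p<2$.

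The key observation driving the whole argument is that the hypothesis $p<\frac{2d}{d-1}$ is algebraically equivalent to $2d-p(d-1)>0$. Therefore, in the regime $p>2$ the upper bound $U(p,d)$ is strictly positive, and in the regime $1<p<2$ the quantity $\frac{2d-p(d-1)}{p-2}$ is negative. Either way, the second condition is compatible with some choice of $\epsilon>0$.

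With these reductions in place, I would finish by a short case split. For $1<p\leq 2$, the second condition imposes no constraint, so I would take any $\epsilon>\max\{0,L(p,d)\}$. For $2<p<\frac{2d}{d-1}$ with $d\geq 2$, one has $\frac{p}{p-1}<2\leq d$, hence $L(p,d)<0$, and any $\epsilon\in(0,U(p,d)]$ works. The one situation requiring even a minor calculation is $d=1$ with $p>2$, where I must check that the interval $\bigl(\tfrac{1}{p-1},\tfrac{2}{p-2}\bigr]$ is nonempty; cross-multiplying by the positive quantity $(p-1)(p-2)$ reduces this to $p>0$, which is automatic.

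I do not anticipate any substantive obstacle: the statement is entirely algebraic, and the cutoff $p=\frac{2d}{d-1}$ in the hypothesis is precisely the threshold at which the upper-bound constraint $U(p,d)$ from the second inequality collapses to zero. This also explains why the same condition reappears as the boundary of validity in Theorem \ref{LA case}.
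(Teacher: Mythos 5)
Your proof is correct and follows essentially the same route as the paper: both reduce the two conditions to $\epsilon>\frac{d+p-dp}{p-1}$ and $(p-2)\epsilon\leq 2d-p(d-1)$, observe that the hypothesis $p<\frac{2d}{d-1}$ is exactly $2d-p(d-1)>0$, and split at $p=2$. The only difference is that you carry out the ``direct calculations'' the paper leaves implicit in the $p>2$ case, with the useful shortcut that $L(p,d)=\frac{p}{p-1}-d<0$ whenever $d\geq 2$ and $p>2$, so that no comparison of endpoints is needed there.
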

\begin{proof}
These two inequalities are equivalent to
\begin{equation*}
	\epsilon>\frac{d+p-dp}{p-1},\text{ and }(p-2)\epsilon\leq2d-p(d-1).
\end{equation*}
Notice that $2d-p(d-1)>0$ always holds. There are two cases.
\begin{itemize}
	\item When $p\in(1,2]$: The second inequality $(p-2)\epsilon\leq2d-p(d-1)$ is trivial, because $(p-2)\epsilon$ is negative while $2d-p(d-1)>0$. We only need to choose an $\epsilon>0$ satisfies the first inequality, for example $\epsilon=p/(p-1)$.
	\item When $p\in(2,\frac{2d}{d-1})$: Now we need to prove the existence of $\delta>0$ so that
	\begin{equation*}
		\frac{d+p-dp}{p-1}<\delta\leq\frac{2d-p(d-1)}{p-2}.
	\end{equation*}
It suffices to prove that the interval $(\frac{d+p-dp}{p-1},\frac{2d-p(d-1)}{p-2}]$ is non-empty, and the right endpoint $\frac{2d-p(d-1)}{p-2}>0$. These are just direct calculations.
\end{itemize}
\end{proof}
Now we choose, for $\epsilon$ in Proposition \ref{choice of parameters},
\begin{equation}\label{choice of parameters in L^A case}
	\begin{aligned}
	a&:=\frac{p(d+\epsilon)}{d+\epsilon+p},\\
	r&:=\frac{p}{a}=\frac{d+\epsilon+p}{d+\epsilon},\ s=\frac{d+\epsilon+p}{p},\\
	T&:=c_d\bigg(\frac{||f||_a}{||f||_p}\bigg)^\frac{as}{d}=c_d\bigg(\frac{||f||_a}{||f||_p}\bigg)^\frac{d+\epsilon}{d},
\end{aligned}
\end{equation}
where $c_d$ is a constant independent of function $f$ to be determined later, see (\ref{choice of c_d for L^A}). Notice that $1/r+1/s=1$, and $1<r,s<\infty$. By Proposition \ref{choice of parameters}, it's easy to verify
\begin{equation*}
	1<a<p,\text{ and }\frac{1}{a}+\frac{1}{p}\geq1.
\end{equation*}
So the pair $(a,p)$ verifies the hypothesis in Theorem \ref{L^A primary UP, higher dimension}. In fact, the inequality $a<p$ is trivial, and $a>1$ is equivalent to $(d+\epsilon)p/(d+\epsilon+p)>1$. Also, the second inequality $1/a+1/p\geq1$ is equivalent to $p\leq2(d+\epsilon)/(d-1+\epsilon)$.
\paragraph{}Similar to the case $L^2$, we use H\"older inequality for $p$ and $q$:
\begin{equation*}
	\begin{aligned}
		\int_{|x|\leq T}|f|^a\leq&\bigg(\int_{|x|\leq T}1\mathrm{d}x\bigg)^\frac{1}{s}\cdot \bigg(\int_{|x|\leq T}|f|^{ar}\bigg)^\frac{1}{r}\\
		\leq&(T^dv_d)^{\frac{1}{s}}||f||_p^{\frac{p}{r}}\\
		\leq&(v_dc_d^d)^\frac{1}{s}\bigg(\frac{||f||_a}{||f||_p}\bigg)^a||f||_p^{\frac{p}{r}}\\
		=&(v_dc_d^d)^\frac{1}{s}||f||_a^a.
	\end{aligned}
\end{equation*}
Choose $c_d$ so that
\begin{equation}\label{choice of c_d for L^A}
	(v_dc^d_d)^\frac{1}{s}=1/2,
\end{equation}
and we have
\begin{equation*}
	\int_{|x|>T}|f|^a\geq\frac{1}{2}\int_{\mathbb{R}^d}|f|^a.
\end{equation*}
\paragraph{}For the integral $\int_{|x|>T}|f|^a$, again, we use H\"older inequality. In $L^2(\mathbb{R}^d)$ case, we used $\int_{|x|>T}1/|x|^{d+1}$. Now we replace it by $\int_{|x|>T}1/|x|^{d+\epsilon}$. 
\begin{equation*}
	\begin{aligned}
			\int_{|x|\geq T}|f|^a=&\int_{|x|\geq T}\frac{1}{|x|^a}|x|^a|f(x)|^a\\
			\leq&\bigg(\int_{|x|\geq T}\frac{1}{|x|^{as}}\bigg)^\frac{1}{s}\cdot\bigg(\int_{|x|\geq T}|x|^{ar}|f(x)|^{ar}\bigg)^\frac{1}{r}\\
			=&\bigg(\int_{|x|\geq T}\frac{1}{|x|^{d+\epsilon}}\bigg)^\frac{1}{s}\cdot\bigg(\int_{|x|\geq T}|x|^{p}|f(x)|^{p}\bigg)^\frac{1}{r}\\
			\leq&\bigg(\frac{\omega_{d-1}}{\epsilon}\bigg)^\frac{1}{s}\bigg(\frac{1}{T}\bigg)^\frac{\epsilon}{s}V_p(f)^\frac{1}{r}.
	\end{aligned}
\end{equation*}
We have
\begin{equation*}
\bigg(\frac{1}{T}\bigg)^\frac{\epsilon}{s}=\bigg(\frac{1}{c_d^\epsilon}\bigg)^\frac{1}{s}\bigg(\frac{||f||_p}{||f||_a}\bigg)^\frac{a\epsilon}{d}
\end{equation*}
So
\begin{equation*}
	\begin{aligned}
	\frac{1}{2}||f||_a^a\leq\int_{|x|\geq T}|f|^a\leq&\bigg(\frac{\omega_{d-1}}{\epsilon c_d^\epsilon}\bigg)^\frac{1}{s}\bigg(\frac{||f||_p}{||f||_a}\bigg)^\frac{a\epsilon}{d}V_p(f)^\frac{1}{r}.
\end{aligned}
\end{equation*}
This is equivalent to
\begin{equation*}
	\begin{aligned}
		\frac{||f||_a^{a+\frac{a\epsilon}{d}}}{||f||_p^{\frac{a\epsilon}{d}+\frac{p}{r}}}\leq2\bigg(\frac{\omega_{d-1}}{\epsilon c_d^\epsilon}\bigg)^\frac{1}{s}\bigg(\frac{V_p(f)}{||f||_p^{p}}\bigg)^\frac{1}{r}.
	\end{aligned}
\end{equation*}
By the choice of $p$ and $a$, we have $p/r=a$. Therefore
\begin{equation*}
	\begin{aligned}
\frac{V_p(f)}{||f||_p^{p}}\geq \frac{1}{2}\bigg(\frac{\epsilon c_d^\epsilon}{2\omega_{d-1}}\bigg)^{r-1}	\bigg(\frac{||f||_a}{||f||_p}\bigg)^{p(1+\frac{\epsilon}{d})}.
	\end{aligned}
\end{equation*}
Replace $f$ by $\hat{f}$,
\begin{equation*}
	\begin{aligned}
		\frac{V_p(\hat{f})}{||\hat{f}||_p^{p}}\geq \frac{1}{2}\bigg(\frac{\epsilon c_d^\epsilon}{2\omega_{d-1}}\bigg)^{r-1}	\bigg(\frac{||\hat{f}||_a}{||\hat{f}||_p}\bigg)^{p(1+\frac{\epsilon}{d})}.
	\end{aligned}
\end{equation*}
Taking a product and use Theorem \ref{L^A primary UP, higher dimension}, we get
\begin{equation*}
	\begin{aligned}
		\frac{V_p(\hat{f})\cdot V_p(f)}{||\hat{f}||_p^{p}\cdot ||f||_p^{p}}\geq\frac{1}{4}\bigg(\frac{\epsilon c_d^\epsilon}{2\omega_{d-1}}\bigg)^{2(r-1)}\bigg(\frac{||f||_a\cdot||\hat{f}||_a}{||f||_p\cdot||\hat{f}||_p}\bigg)^{p(1+\frac{\epsilon}{d})}\geq\frac{1}{4}\bigg(\frac{\epsilon c_d^\epsilon}{2\omega_{d-1}}\bigg)^{2(r-1)}.
	\end{aligned}
\end{equation*}
\subsection{Asymptotic behavior of the quotient}
\paragraph{}In this subsection, we study the growth of the quotient  $\big(\frac{\epsilon c_d^\epsilon}{2\omega_{d-1}}\big)^{r-1}$ when $d$ tends to infinity and $p$ fixed. Notice that if $p$ is fixed and $1<p<2d/(d-1)$ holds for all $d\geq1$, then we must have $1<p\leq2$. The case $p=2$ has been proved. Therefore, in this subsection, we consider a fixed number $p\in(1,2)$. Moreover, once $p\in(1,2)$ is fixed, the choice of $\epsilon$ in Proposition \ref{choice of parameters} can be made independent of $d$, for example, we may take $\epsilon=p/(p-1)>1$. By (\ref{choice of c_d for L^A}), 
\begin{equation*}
	\begin{aligned}
	\bigg(\frac{\epsilon c_d^\epsilon}{2\omega_{d-1}}\bigg)^{r-1}=\bigg(\frac{\epsilon}{2^{\frac{s\epsilon}{d}+1}}\bigg)^{r-1}\bigg(\frac{1}{\omega_{d-1}v_d^{\frac{\epsilon}{d}}}\bigg)^{r-1}
\end{aligned}
\end{equation*}
Recall $r=1+\frac{p}{d+\epsilon}$, for the second factor, we have
\begin{equation*}
	\begin{aligned}
	\bigg(\frac{1}{\omega_{d-1}v_d^{\frac{\epsilon}{d}}}\bigg)^{r-1}&=\frac{1}{\omega_{d-1}^{\frac{p}{d+\epsilon}}v_d^{\frac{\epsilon p}{d(d+\epsilon)}}}
\end{aligned}
\end{equation*}
\paragraph{Asymptotic behavior of $\big(\epsilon/2^{\frac{s\epsilon}{d}+1}\big)^{r-1}$:} Recall that $\epsilon$ is independent of $d$, we have
\begin{equation*}
	\bigg(\frac{\epsilon}{2^{\frac{s\epsilon}{d}+1}}\bigg)^{r-1}=\frac{\epsilon^{\frac{p}{d+\epsilon}}}{2^{\frac{\epsilon r}{d}+(r-1)}}\sim\frac{1}{2^{(\frac{\epsilon}{d}+1)r}}=\frac{1}{2^{\frac{d+\epsilon+p}{d}}}\sim1.
\end{equation*}

\paragraph{Asymptotic behavior of $v_d^{\frac{\epsilon p}{d(d+\epsilon)}}$:}We have
\begin{equation*}
	\frac{1}{v_d^{\frac{\epsilon p}{d(d+\epsilon)}}}=\frac{\Gamma\Big(\frac{d}{2}+1\Big)^\frac{\epsilon p}{d(d+\epsilon)}}{\pi^\frac{p}{2(d+\epsilon)}}\sim \Gamma\Big(\frac{d}{2}+1\Big)^\frac{\epsilon p}{d(d+\epsilon)}\sim\bigg(\sqrt{\frac{4\pi}{d+2}}\Big(\frac{d+2}{2\mathrm{e}}\Big)^{\frac{d}{2}+1}\bigg)^\frac{\epsilon p}{d(d+\epsilon)}\sim1.
\end{equation*}

\paragraph{Asymptotic behavior of $\omega_{d-1}^{\frac{p}{d+\epsilon}}$:}We have
\begin{equation*}
	\begin{aligned}
		\frac{1}{\omega_{d-1}^{\frac{p}{d+\epsilon}}}=\frac{\Gamma(d/2)^\frac{p}{d+\epsilon}}{(2\pi^{d/2})^\frac{p}{d+\epsilon}}\sim\Gamma\Big(\frac{d}{2}\Big)^\frac{p}{d+\epsilon}\sim\bigg(\sqrt{\frac{4\pi}{d}}\big(\frac{d}{2\mathrm{e}}\big)^{\frac{d}{2}}\bigg)^\frac{p}{d+\epsilon}.
	\end{aligned}
\end{equation*}
The following inequalities holds for all $d\geq1$:
\begin{equation*}
	\sqrt{\frac{2\mathrm{e}}{d}}\big(\frac{d}{2\mathrm{e}}\big)^{\frac{d}{2}}<\sqrt{\frac{4\pi}{d}}\big(\frac{d}{2\mathrm{e}}\big)^{\frac{d}{2}}<2\sqrt{\frac{2\mathrm{e}}{d}}\big(\frac{d}{2\mathrm{e}}\big)^{\frac{d}{2}}.
\end{equation*}
Therefore
\begin{equation*}
	\bigg(\sqrt{\frac{4\pi}{d}}\big(\frac{d}{2\mathrm{e}}\big)^{\frac{d}{2}}\bigg)^\frac{p}{d+\epsilon}\sim\bigg(\sqrt{\frac{2\mathrm{e}}{d}}\big(\frac{d}{2\mathrm{e}}\big)^{\frac{d}{2}}\bigg)^\frac{p}{d+\epsilon}=\bigg(\frac{d}{2\mathrm{e}}\bigg)^{\frac{p(d-1)}{2(d+\epsilon)}}\sim d^\frac{p}{2}.
\end{equation*}

\paragraph{Sharpness of the growth $d^p$:} Taking Gaussian $g(x)=e^{-\pi|x|^2}$, a direct calculation shows that 
\begin{equation*}
	||g||_p^p=\frac{1}{p^{\frac{d}{2}}},\text{ and }V_p(g)=\frac{\omega_{d-1}}{2(\pi p)^\frac{p+d}{2}}\Gamma\Big(\frac{p+d}{2}\Big).
\end{equation*}
Therefore,
	\begin{equation*}
	\frac{V_p(g)}{||g||_p^p}\frac{V_p(\hat{g})}{||\hat{g}||^p_p}=\frac{1}{(\pi p)^p}\frac{\Gamma\Big(\frac{p+d}{2}\Big)^2}{\Gamma\Big(\frac{d}{2}\Big)^2}\sim d^p,
\end{equation*}
which shows that up to a dimension-free constant, the growth $d^p$ is optimal.
\subsection{Sharpness of the range $p$}
To prove the sharpness of the range, we need to show that when $p\in(\frac{2d}{d-1},\infty)$,
\begin{equation*}
	\inf_{f\in\mathcal{S}(\mathbb{R}^d)\setminus\{0\}}\frac{V_p(f)}{||f||_p^p}\frac{V_p(\hat{f})}{||\hat{f}||^p_p}=0.
\end{equation*}
We use the functions
\begin{equation*}
	g_c=c^{-\frac{d}{2}}\mathrm{e}^{-\pi\frac{|x|^2}{c^2}}+c^\frac{d}{2}\mathrm{e}^{-\pi c^2|x|^2},\ c>0.
\end{equation*}
Notice that $g_c=\widehat{g_c}$, and these functions already appeared in \cite{refww} when $d=1$. We estimate the $L^p$ norm of $g_c$ and $|x|\cdot|g_c|$
\begin{equation*}
\begin{aligned}
	\int|g_c|^p\geq& c^{-\frac{dp}{2}}\int\mathrm{e}^{-\pi\frac{p|x|^2}{c^2}}\mathrm{d}x+c^{\frac{dp}{2}}\int\mathrm{e}^{-\pi pc^2|x|^2}\mathrm{d}x\\
	=&c^{-\frac{dp}{2}}\cdot \frac{c^d}{p^\frac{d}{2}}+c^{\frac{dp}{2}}\cdot \frac{1}{p^\frac{d}{2}c^d}\\
	=&\frac{c^{d-\frac{dp}{2}}+c^{\frac{dp}{2}-d}}{p^\frac{d}{2}}.
\end{aligned}
\end{equation*}
For $|x|g_c$, we have
\begin{equation*}
	\begin{aligned}
		\int|x|^p|g_c|^p&\leq2^{p-1}\int|x|^p c^{-\frac{dp}{2}}\mathrm{e}^{-\pi\frac{p|x|^2}{c^2}}+|x|^pc^\frac{dp}{2}\mathrm{e}^{-\pi pc^2|x|^2}\\
		&\leq\frac{2^{p-1}\omega_{d-1}\Gamma(\frac{d+p}{2})}{2(\pi
	p)^\frac{d+p}{2}}\bigg(c^{d+p-\frac{dp}{2}}+c^{-(d+p)+\frac{dp}{2}}\bigg).
	\end{aligned}
\end{equation*}
So
\begin{equation*}
	\frac{\int|x|^p|g_c|^p}{\int|g_c|^p}\lesssim\frac{c^{d+p-\frac{dp}{2}}+c^{-(d+p)+\frac{dp}{2}}}{c^{d-\frac{dp}{2}}+c^{\frac{dp}{2}-d}}=:h(c),
\end{equation*}
where the implicit constant in $\lesssim$ is independent of $c$. Let $t:=c^{\frac{dp}{2}-d}$, then
\begin{equation*}
	\frac{c^{d+p-\frac{dp}{2}}+c^{-(d+p)+\frac{dp}{2}}}{c^{d-\frac{dp}{2}}+c^{\frac{dp}{2}-d}}=\frac{t^{\frac{2}{(p-2)d}(d+p-\frac{dp}{2})}+t^{-\frac{2}{(p-2)d}(d+p-\frac{dp}{2})}}{t+\frac{1}{t}}
\end{equation*}
There are two cases
\begin{itemize}
	\item The case $d+p-\frac{dp}{2}\geq0$. Notice that $p>\frac{2d}{d-1}>2$, so we have $\frac{2}{(p-2)d}(d+p-\frac{dp}{2})\geq0$. In this case, it's easy to check that $p>\frac{2d}{d-1}$ is equivalent to
	\begin{equation*}
		\frac{2}{(p-2)d}(d+p-\frac{dp}{2})<1.
	\end{equation*}

    \item The case $d+p-\frac{dp}{2}<0$, now we have $-\frac{2}{(p-2)d}(d+p-\frac{dp}{2})>0$. It is easy to check that $p>1$ implies
    \begin{equation*}
    	-\frac{2}{(p-2)d}(d+p-\frac{dp}{2})<1.
    \end{equation*}
\end{itemize}
So after a change of variable $t=c^{\frac{dp}{2}-d}$, the function $h(c)$ has the form
\begin{equation*}
	\frac{t^\alpha+\frac{1}{t^\alpha}}{t+\frac{1}{t}},\ 0\leq\alpha<1.
\end{equation*}
Let $t\rightarrow\infty$, we get the infimum which is $0$.

\section{Cowling-Price theorem in higher dimension}\label{Proof of Cowling-Price}
Firstly, we point out that the third condition $\frac{1}{q}+\frac{\phi}{d}=\frac{1}{p
}+\frac{\theta}{d}$ is an easy consequence of homogeneous consideration. If (\ref{Cowling-Price inequalitty, d-dimension}) holds for all $f\in\mathcal{S}(\mathbb{R}^d)$, then it also holds for $f(cx)$ for all $c>0$. After a simplification we get
\begin{equation*}
	\frac{c^{\phi+\frac{d}{q}}}{c^dc^{\theta+\frac{\theta}{p}}}	\Big|\Big||x|^\theta f\Big|\Big|_p\cdot\Big|\Big||\xi|^\phi\hat{f}\Big|\Big|_q\geq\frac{1}{c^d}||f||^2_2.
\end{equation*}
We see that $d/q+\phi=\theta/p+\theta$ must hold, otherwise, take $c\rightarrow0$ or $c\rightarrow\infty$ would give a contradiction. So the real difficult parts are first two inequalities in (\ref{Cowling-Price d_dimension condition}).

\subsection{Proof of Theorem \ref{Cowling-Price theorem, d-dimension}: If part}
Similar to the proof of Theorem \ref{LA case}, we should make careful choices of some parameters. The following proposition is an analogue of Proposition \ref{choice of parameters}.
\begin{prop}\label{choice of parameters: Cowling-Price}
	There exists $\delta>0$, so that
	\begin{equation*}
		\begin{aligned}
			1+\frac{d}{\phi q}-(1-\frac{1}{p})(1+\frac{d}{\phi q})\frac{d}{\theta}<\delta<\min\Big\{1+\frac{d}{\phi q},1+\frac{d}{\theta p},	1+\frac{d}{\phi q}-(\frac{1}{2}-\frac{1}{p})(1+\frac{d}{\phi q})\frac{d}{\theta}\Big\}.
		\end{aligned}
	\end{equation*}
\end{prop}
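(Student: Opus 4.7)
The plan is to abbreviate $A := 1 + \frac{d}{\phi q}$, so that the left-hand side of the desired inequality reads $A\bigl(1 - (1-\tfrac{1}{p})\tfrac{d}{\theta}\bigr)$ and the three candidates in the minimum become $A$, $1+\frac{d}{\theta p}$, and $A\bigl(1-(\tfrac{1}{2}-\tfrac{1}{p})\tfrac{d}{\theta}\bigr)$. The proposition will be established once I verify two things: first, that LHS is strictly smaller than each of these three quantities (so that the open interval is non-empty), and second, that the minimum is strictly positive (so that the interval contains a positive number, and we can choose $\delta>0$ in $(\max\{0,\text{LHS}\},\min)$).

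For the three comparisons, the first and third are essentially formal: LHS $< A$ reduces to $(1-\tfrac{1}{p})\tfrac{Ad}{\theta}>0$, which holds since $p>1$; and LHS $< A\bigl(1-(\tfrac{1}{2}-\tfrac{1}{p})\tfrac{d}{\theta}\bigr)$ reduces after cancellation to $\tfrac{1}{2}\cdot\tfrac{Ad}{\theta}>0$. The substantive comparison is LHS $< 1+\frac{d}{\theta p}$, and this is where the balance condition $\tfrac{1}{q}+\tfrac{\phi}{d}=\tfrac{1}{p}+\tfrac{\theta}{d}$ enters. My plan is to substitute $A-1=\tfrac{d}{\phi q}$, clear denominators, and reduce the inequality to the scalar form
\[
\theta < \phi q + d - \tfrac{d}{p}.
\]
Using the equality condition in the form $\phi = \theta + d(\tfrac{1}{p}-\tfrac{1}{q})$, i.e.\ $\phi q = \theta q + \tfrac{dq}{p} - d$, the right-hand side becomes $\theta q + \tfrac{d(q-1)}{p}$, and what remains is $-\theta(q-1)<\tfrac{d(q-1)}{p}$, which is obvious since $q>1$ and $\theta,d,p>0$.

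For the positivity of the minimum, the first two candidates $A$ and $1+\tfrac{d}{\theta p}$ are positive for free, so only the third, $A\bigl(1-(\tfrac{1}{2}-\tfrac{1}{p})\tfrac{d}{\theta}\bigr)$, requires attention. If $p\le 2$ the factor in parentheses is $\ge 1$ and positivity is automatic; if $p>2$ the condition becomes $\tfrac{\theta}{d}>\tfrac{1}{2}-\tfrac{1}{p}$, which is precisely the first inequality assumed in (\ref{Cowling-Price d_dimension condition}). Note that the symmetric hypothesis $\tfrac{\phi}{d}>\tfrac{1}{2}-\tfrac{1}{q}$ is not needed separately: the balance equality makes it equivalent to the one on $\theta$.

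The only potential obstacle is the third comparison LHS $< 1+\tfrac{d}{\theta p}$, which looks unmotivated at first glance but collapses to a trivial inequality after the right algebraic manipulation, as outlined above. Everything else is bookkeeping, and the choice $\delta \in (\max\{0,\text{LHS}\},\min)$ then furnishes the desired $\delta>0$.
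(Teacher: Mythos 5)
Your proposal is correct and takes essentially the same approach as the paper: both reduce the substantive comparison $\text{LHS} < 1 + \tfrac{d}{\theta p}$ via the balance condition $\tfrac{1}{q}+\tfrac{\phi}{d}=\tfrac{1}{p}+\tfrac{\theta}{d}$ to the hypothesis $q>1$, and both reduce the positivity of the third candidate to $\tfrac{\theta}{d}>\tfrac12-\tfrac1p$. The only organizational difference is that the paper splits into cases $p\geq 2$ and $1<p<2$ so as to drop one redundant candidate from the minimum in each case, whereas you verify all four comparisons uniformly and invoke the case distinction only where it genuinely matters (positivity of the third candidate); your observation that $\tfrac{\phi}{d}>\tfrac12-\tfrac1q$ is equivalent to $\tfrac{\theta}{d}>\tfrac12-\tfrac1p$ under the balance condition is also correct and implicit in the paper.
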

\begin{proof}
	We consider two cases: $p\geq2$ and $1<p<2$.
	\begin{itemize}
		\item When $p\geq2$, it suffices to prove that there exists a strictly positive $\delta$ so that
			\begin{equation*}
			\begin{aligned}
				1+\frac{d}{\phi q}-(1-\frac{1}{p})(1+\frac{d}{\phi q})\frac{d}{\theta}<\delta<\min\Big\{1+\frac{d}{\theta p},	1+\frac{d}{\phi q}-(\frac{1}{2}-\frac{1}{p})(1+\frac{d}{\phi q})\frac{d}{\theta}\Big\}.
			\end{aligned}
		\end{equation*}
	For this we only need to show that  
	\begin{equation*}
		\min\Big\{1+\frac{d}{\theta p},	1+\frac{d}{\phi q}-(\frac{1}{2}-\frac{1}{p})(1+\frac{d}{\phi q})\frac{d}{\theta}\Big\}>\max\Big\{0,	1+\frac{d}{\phi q}-(1-\frac{1}{p})(1+\frac{d}{\phi q})\frac{d}{\theta}\Big\}.
	\end{equation*}
We omit details and point out that
\begin{equation*}
	\begin{aligned}
		1+\frac{d}{\theta p}>1+\frac{d}{\phi q}-(1-\frac{1}{p})(1+\frac{d}{\phi q})\frac{d}{\theta}\text{ holds, if and only if }\theta+\frac{d}{p}<d+\phi q.
	\end{aligned}
\end{equation*}
Notice that $\frac{d}{q}+\phi=\frac{d}{p}+\theta$, so $\theta+\frac{d}{p}<d+\phi q$ is equivalent to $q>1$, which is one of the hypothesis. Also,
\begin{equation*}
	\begin{aligned}
	1+\frac{d}{\phi q}-(\frac{1}{2}-\frac{1}{p})(1+\frac{d}{\phi q})\frac{d}{\theta}>0\text{ holds, if and only if }\frac{\theta}{d}>\frac{1}{2}-\frac{1}{p},
	\end{aligned}
\end{equation*} 
which is our hypothesis. Other parts are almost trivial.
\item When $1<p<2$, it suffices to prove that there exists a $\delta>0$ so that
	\begin{equation*}
	\begin{aligned}
		1+\frac{d}{\phi q}-(1-\frac{1}{p})(1+\frac{d}{\phi q})\frac{d}{\theta}<\delta<\min\Big\{1+\frac{d}{\theta p},	1+\frac{d}{\phi q}\Big\}.
	\end{aligned}
\end{equation*}
We omit the details and point out that we only need to show
\begin{equation*}
		\min\Big\{1+\frac{d}{\theta p},	1+\frac{d}{\phi q}\Big\}>\max\Big\{0,	1+\frac{d}{\phi q}-(1-\frac{1}{p})(1+\frac{d}{\phi q})\frac{d}{\theta}\Big\}.
\end{equation*}
The above inequality is equivalent to $p,q>1$.
	\end{itemize}
\end{proof}
Once $\delta$ has been chosen, we define $\epsilon,\tilde{\epsilon}$ via
\begin{equation}\label{definition of epsilon and tilde(epsilon)}
	\frac{\epsilon}{d+\epsilon}=\delta\frac{\phi q}{d+\phi q},\text{ and }\frac{\tilde{\epsilon}}{d+\tilde{\epsilon}}=\delta\frac{\theta p}{d+\theta p}.
\end{equation}
Particularly, 
\begin{equation*}
	\frac{d}{\epsilon}=\frac{d+\phi q-\delta\phi q}{\delta\phi q}.
\end{equation*}
By Proposition \ref{choice of parameters: Cowling-Price}, the numerator $d+\phi q-\delta \phi q>0$, so $\epsilon>0$. By the same reason, we have $\tilde{\epsilon}>0$. 
\par Now we choose
\begin{equation*}
	\begin{aligned}
		a&:=\frac{p}{1+\frac{p\theta}{d+\epsilon}},\\ r&:=\frac{2}{a}\text{ and }s:=\frac{r}{r-1},\\
		T&:=c_d\bigg(\frac{||f||_a}{||f||_2}\bigg)^{\frac{as}{d}},
	\end{aligned}
\end{equation*}
where $c_d$ is the constant so that $(c_d^dv_d)^{1/s}=1/2$.
\par By H\"older inequality for the pair $(r,s)$ as before, we have
\begin{equation*}
	\frac{1}{2}||f||_a^a\leq\int_{|x|>T}|f|^a.
\end{equation*}
Now we choose $b>0$ and $r_1,s_1\in(1,\infty)$ so that
\begin{equation}\label{choose parameters: original side}
	\begin{aligned}
		ar_1&=p,\\
		s_1&=\frac{r_1}{r_1-1},\\
		br_1&=\theta p.\\	
	\end{aligned}
\end{equation}
Moreover, we have
\begin{equation*}
	bs_1=\frac{\theta p}{r_1-1}=\frac{\theta p}{\frac{p}{a}-1}=d+\epsilon.
\end{equation*}
Trivially, we have $b>0$, and we still need to verify $r_1>1$. This equivalent to $p>a$, which is true by the choice of $a$. Now, writing
\begin{equation*}
	\int_{|x|>T}|f|^a=\int_{|x|>T}\frac{1}{|x|^b}\cdot|x|^b|f|^a
\end{equation*}
and using H\"older inequality for the pair $(s_1,r_1)$, we can prove (recall $bs_1=d+\epsilon$ and $br_1=\theta p$)
\begin{equation*}
	\frac{1}{2}||f||_a^a\leq\bigg(\frac{\omega_{d-1}}{\epsilon T^\epsilon}\bigg)^\frac{1}{s_1}	\Big|\Big||x|^\theta f\Big|\Big|^a_p,
\end{equation*}
after a simplification
\begin{equation*}
 \frac{\Big|\Big||x|^\theta f\Big|\Big|_p}{||f||_2}\geq\bigg(\frac{\epsilon c_d^{\epsilon}}{2^{s_1}\omega_{d-1}}\bigg)^\frac{1}{as_1}\bigg(\frac{||f||_a}{||f||_2}\bigg)^{1+\frac{s\epsilon}{ds_1}}
\end{equation*}
\paragraph{}For the Fourier transform side, we choose
\begin{equation*}
	\begin{aligned}
		\tilde{a}:=\frac{q}{1+\frac{q\phi}{d+\tilde{\epsilon}}},\quad  \tilde{r}:=\frac{2}{\tilde{a}}\text{ and }\tilde{s}:=\frac{\tilde{r}}{\tilde{r}-1},\quad
		\tilde{T}:=\widetilde{c_d}\bigg(\frac{||\hat{f}||_{\tilde{a}}}{||\hat{f}||_2}\bigg)^{\frac{\tilde{a}\tilde{s}}{d}},
	\end{aligned}
\end{equation*}
Now we verify that $a=\tilde{a}$ under our choice of $\epsilon,\tilde{\epsilon}$. After taking the reciprocal, it suffices to prove
\begin{equation*}
	\frac{1}{q}+\frac{\phi}{d+\tilde{\epsilon}}=\frac{1}{p}+\frac{\theta}{d+\epsilon}.
\end{equation*}
Using the hypothesis $1/q+\phi/d=1/p+\theta/d$, we only need to prove $\frac{\phi}{d+\tilde{\epsilon}}-\frac{\phi}{d}=	\frac{\theta}{d+\epsilon}-\frac{\theta}{d}$, this is equivalent to
\begin{equation*}
	\frac{\phi\tilde{\epsilon}}{d+\tilde{\epsilon}}=\frac{\theta\epsilon}{d+\epsilon}.
\end{equation*}
The above equality is easy to prove by combining the hypothesis $1/q+\phi/d=1/p+\theta/d$ with the choice of $\epsilon$ and $\tilde{\epsilon}$, see (\ref{definition of epsilon and tilde(epsilon)}).
\par As an immediate corollary of $a=\tilde{a}$, we have
\begin{equation*}
	\tilde{r}=r,\quad \tilde{s}=s.
\end{equation*}
The constant $\widetilde{c_d}$ satisfies $(\widetilde{c_d}^dv_d)^{1/\tilde{s}}=1/2$, so $\widetilde{c_d}=c_d$.
Now parallel to previous proof, we obtain
\begin{equation*}
		\frac{1}{2}||\hat{f}||_{\tilde{a}}^{\tilde{a}}=\frac{1}{2}||\hat{f}||_a^a\leq\int_{|\xi|>\widetilde{T}}|\hat{f}|^a.
\end{equation*}
Now we choose $\tilde{b}>0$ and $\widetilde{r_1},\widetilde{s_1}\in(1,\infty)$ so that
\begin{equation}\label{choose parameters: Fourier transform side}
	\begin{aligned}
		\tilde{a}\widetilde{r_1}&=q,\\
		\widetilde{s_1}&=\frac{\widetilde{r_1}}{\widetilde{r_1}-1},\\
		\tilde{b}\widetilde{r_1}&=\phi q.\\	
	\end{aligned}
\end{equation}
Similar to $bs_1=d+\epsilon$, we have $\tilde{b}\widetilde{s_1}=d+\tilde{\epsilon}$. Now we can prove, by the same method,
\begin{equation*}
	\frac{1}{2}||\hat{f}||_a^a\leq\bigg(\frac{\omega_{d-1}}{\tilde{\epsilon}\big(\widetilde{T}\big)^{\tilde{\epsilon}}}\bigg)^{1/\widetilde{s_1}}	\Big|\Big||\xi|^\phi \hat{f}\Big|\Big|^a_q,
\end{equation*}
A simplification shows that
\begin{equation*}
	\frac{\Big|\Big||\xi|^\phi \hat{f}\Big|\Big|_q}{||\hat{f}||_2}\geq\bigg(\frac{\tilde{\epsilon}c_d^{\tilde{\epsilon}}}{2^{\widetilde{s_1}}\omega_{d-1}}\bigg)^{1/a\widetilde{s_1}}\bigg(\frac{||\hat{f}||_a}{||\hat{f}||_2}\bigg)^{s\tilde{\epsilon}/d\widetilde{s_1}+1}.
\end{equation*}
Take a product we get
\begin{equation*}
 \frac{\Big|\Big||x|^\theta f\Big|\Big|_p}{||f||_2}\frac{\Big|\Big||\xi|^\phi \hat{f}\Big|\Big|_q}{||\hat{f}||_2}\geq\bigg(\frac{\epsilon c_d^{\epsilon}}{2^{s_1}\omega_{d-1}}\bigg)^\frac{1}{as_1}\bigg(\frac{\tilde{\epsilon}c_d^{\tilde{\epsilon}}}{2^{\widetilde{s_1}}\omega_{d-1}}\bigg)^{1/a\widetilde{s_1}}\bigg(\frac{||f||_a}{||f||_2}\bigg)^{1+\frac{\epsilon s}{ds_1}}\bigg(\frac{||\hat{f}||_a}{||\hat{f}||_2}\bigg)^{\tilde{\epsilon} s/d\widetilde{s_1}+1}.
\end{equation*}
Notice that by our choice of $\epsilon,\tilde{\epsilon}$,
\begin{equation}\label{epsilon/s_1 is equal to tilde(epsilon)/tilde(s_1)}
	1+\frac{\epsilon s}{ds_1}=1+\frac{\tilde{\epsilon}s}{d\widetilde{s_1}}.
\end{equation}
To prove this, it suffices to check that $\epsilon/s_1=\tilde{\epsilon}/\widetilde{s_1}$. We recall $bs_1=d+\epsilon$ and $\tilde{b}\widetilde{s_1}=d+\tilde{\epsilon}$, so (\ref{epsilon/s_1 is equal to tilde(epsilon)/tilde(s_1)}) is equivalent to $\epsilon b/(d+\epsilon)=\tilde{\epsilon}\tilde{b}/(d+\widetilde{\epsilon})$. Also by (\ref{choose parameters: original side}) and (\ref{choose parameters: Fourier transform side}), we have $b=a\theta$ and $\tilde{b}=\tilde{a}\phi$. Finally, we already proved $a=\tilde{a}$, so (\ref{epsilon/s_1 is equal to tilde(epsilon)/tilde(s_1)}) is equivalent to
\begin{equation*}
	\frac{\epsilon \theta}{d+\epsilon}=\frac{\tilde{\epsilon}\phi}{d+\widetilde{\epsilon}}.
\end{equation*}
This equality is easy to verify by the definition of $\epsilon$ and $\tilde{\epsilon}$, see (\ref{definition of epsilon and tilde(epsilon)}).
\par So, by (\ref{epsilon/s_1 is equal to tilde(epsilon)/tilde(s_1)}), we get
\begin{equation*}
	\begin{aligned}
	 \frac{\Big|\Big||x|^\theta f\Big|\Big|_p}{||f||_2}\frac{\Big|\Big||\xi|^\phi \hat{f}\Big|\Big|_q}{||\hat{f}||_2}&\geq\bigg(\frac{\epsilon c_d^{\epsilon}}{2^{s_1}\omega_{d-1}}\bigg)^\frac{1}{as_1}\bigg(\frac{\tilde{\epsilon}c_d^{\tilde{\epsilon}}}{2^{\widetilde{s_1}}\omega_{d-1}}\bigg)^{1/a\widetilde{s_1}}\bigg(\frac{||f||_a||\hat{f}||_a}{||f||_2||\hat{f}||_2}\bigg)^{1+\frac{\epsilon s}{ds_1}}\\
	 &\geq\bigg(\frac{\epsilon c_d^{\epsilon}}{2^{s_1}\omega_{d-1}}\bigg)^\frac{1}{as_1}\bigg(\frac{\tilde{\epsilon}c_d^{\tilde{\epsilon}}}{2^{\widetilde{s_1}}\omega_{d-1}}\bigg)^{1/a\widetilde{s_1}},
 \end{aligned}
\end{equation*}
where the last inequality we use (\ref{L^2 estimate for f and its fourier transform}).

\subsection{Proof of Theorem \ref{Cowling-Price theorem, d-dimension}: Only if part}\label{Proof of Cowling-Price theorem:  Only if part}
\paragraph{}Now we consider the converse part. It suffices to prove that the theorem is not true if
\begin{equation*}
		\frac{\theta}{d}\leq\frac{1}{2}-\frac{1}{p}\quad\text{or}\quad\frac{\phi}{d}\leq\frac{1}{2}-\frac{1}{q}.
\end{equation*}
There are two cases. The first case is $	\frac{\theta}{d}=\frac{1}{2}-\frac{1}{p}$, and $\frac{\phi}{d}=\frac{1}{2}-\frac{1}{q}.$ The second case is $	\frac{\theta}{d}<\frac{1}{2}-\frac{1}{p}$, and $\frac{\phi}{d}<\frac{1}{2}-\frac{1}{q}.$ Notice that due to the homogeneous condition,
\begin{equation*}
	\frac{1}{q}+\frac{\phi}{d}=\frac{1}{p}+\frac{\theta}{d},
\end{equation*} 
other cases is impossible, for example, we do not need to consider  $\frac{\theta}{d}=\frac{1}{2}-\frac{1}{p}$, and $\frac{\phi}{d}<\frac{1}{2}-\frac{1}{q}$.

\subsubsection{The case $	\frac{\theta}{d}=\frac{1}{2}-\frac{1}{p}$, and $\frac{\phi}{d}=\frac{1}{2}-\frac{1}{q}.$}
\paragraph{}Our goal is to construct an $f$ so that
\begin{equation*}
	|x|^\theta f\in L^p(\mathbb{R}^d)\text{ and }|\xi|^\phi\hat{f}\in L^q(\mathbb{R}^d),\text{ but }f\notin L^2(\mathbb{R}^d).
\end{equation*}
For this, we need a theorem of Wainger. The following version is a special case of Wainger's theorem, where we choose $l=0$, and $b(t)$ in slowly varying class(\cite{Wainger}, Definition 2, page 7) is a logarithmic power.
\begin{thm}[\cite{Wainger}, Theorem 1, page 8]Let $b(t)=\log^{-\frac{1}{2}}(t+2)$. Let $\psi(t)$ be a smooth function with the following properties
	\begin{itemize}
		\item $\psi(t)=0$ when $t\leq\frac{1}{2}$.
		\item $\psi(t)=1$ when $t\geq1$.
		\item $0\leq\psi(t)\leq1$ for all $t\in\mathbb{R}$.
	\end{itemize} Assume that $-\infty<\gamma<d$, and $\gamma\neq0,-2,-4,...$ Define
	\begin{equation*}
		F_\epsilon(x):=\mathcal{F}^{-1}\Big(\frac{\psi(|\xi|)}{|\xi|^\gamma}b(|\xi|)\mathrm{e}^{-\epsilon|\xi|}\Big),
	\end{equation*}
where $\mathcal{F}^{-1}$ is the Fourier inversion. Then
\begin{itemize}
	\item[(1)] $F(x):=\lim_{\epsilon\rightarrow0^+}F_\epsilon(x)$ exists for all $x\neq0$, and $F(x)$ is $C^\infty$-differentiable when $x\neq0$.
	\item[(2)]For any integer $r$, we have $|F_\epsilon(x)|=\mathcal{O}(|x|^{-r})$ uniformly in $\epsilon\geq0$, as $|x|\rightarrow\infty$.
\end{itemize}
When $x$ is near $0$, we have
\begin{itemize}
	\item[(3)]$F(x)=C(\gamma,d)\cdot b\big(\frac{1}{|x|}\big)|x|^{\gamma-d}+E(x)$, where $C(\gamma,d)$ is a non-zero constant and $E(x)$ is the error term.
	\item[(4)]The error term satisfies $|E(x)|=\mathcal{O}(1)+o\Big(b\big(1/|x|\big)|x|^{\gamma-d}\Big)$ as $x\rightarrow0$.
\end{itemize}
Also, if $F(x)\in L^1(\mathbb{R}^d)$, then $F_\epsilon(x)$ is dominated by an integrable function, and 
\begin{equation*}
	\widehat{F}(\xi)=\frac{\psi(|\xi|)}{|\xi|^\gamma}b(|\xi|).
\end{equation*} 
\end{thm}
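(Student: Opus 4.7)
The plan rests on a radial Fourier transform analysis. Since $m(\xi) := \psi(|\xi|)|\xi|^{-\gamma}b(|\xi|)\mathrm{e}^{-\epsilon|\xi|}$ is a radial function, its inverse Fourier transform is again radial and reduces to a Hankel transform
\begin{equation*}
F_\epsilon(x) = c_d|x|^{-(d-2)/2}\int_0^\infty r^{d/2}\psi(r)r^{-\gamma}b(r)\mathrm{e}^{-\epsilon r}J_{(d-2)/2}(2\pi r|x|)\,\mathrm{d}r,
\end{equation*}
where $J_\nu$ is the Bessel function of the first kind. All four assertions reduce to a careful study of this one-dimensional oscillatory integral.

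For part (2), I would integrate by parts repeatedly against the oscillatory kernel $J_{(d-2)/2}(2\pi r|x|)$ using the standard identity $\frac{\mathrm{d}}{\mathrm{d}r}(r^{\nu+1}J_{\nu+1}(ar)) = ar^{\nu+1}J_\nu(ar)$. The cutoff $\psi$ removes the singularity at the origin, and the amplitude $\psi(r)r^{-\gamma}b(r)\mathrm{e}^{-\epsilon r}$ together with all its derivatives is smooth on $[1/2,\infty)$ and absolutely integrable uniformly in $\epsilon\geq 0$, so each integration by parts gains a factor of $|x|^{-1}$, yielding $|F_\epsilon(x)|=\mathcal{O}(|x|^{-r})$ for every integer $r$. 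For part (1), pointwise convergence as $\epsilon\rightarrow 0^+$ on the set $\{x\neq 0\}$ follows by dominated convergence on the same Hankel integral, using the oscillatory decay $J_\nu(s)\sim s^{-1/2}$ at infinity together with the cutoff $\psi$; smoothness in $x$ follows from differentiation under the integral sign.

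The heart of the matter is parts (3) and (4), the asymptotics as $x\rightarrow 0$. I would perform the scaling $r=\rho/|x|$, which formally transforms the integral to
\begin{equation*}
F(x)=c_d|x|^{\gamma-d}\int_0^\infty \rho^{d/2-\gamma}\psi(\rho/|x|)b(\rho/|x|)J_{(d-2)/2}(2\pi\rho)\,\mathrm{d}\rho,
\end{equation*}
and then use the slow-variation property of $b$, namely that $b(\rho/|x|)/b(1/|x|)\rightarrow 1$ uniformly on compact $\rho$-sets away from $0$. Pulling the factor $b(1/|x|)$ out leaves, in the limit, the classical Hankel integral representing the Fourier transform of $|\xi|^{-\gamma}$; the resulting leading coefficient $C(\gamma,d)$ can be read off from the Riesz potential formula and is nonzero precisely when $\gamma\neq 0,-2,-4,\ldots$, these being the poles of the $\Gamma$-function in the explicit expression.

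The main obstacle, and the reason the error term $E(x)$ has the particular form stated, is the need to split the $\rho$-integral into three regions and estimate each one quantitatively. Near $\rho=0$ one uses $\gamma<d$ to ensure integrability of $\rho^{d/2-\gamma}J_{(d-2)/2}(2\pi\rho)\sim \rho^{d-\gamma-1}$. On a fixed annulus the slow-variation estimate $|b(\rho/|x|)/b(1/|x|)-1|=\mathcal{O}(\log^{-1}(1/|x|))$, specific to $b(t)=\log^{-1/2}(t+2)$, produces a contribution of order $o(b(1/|x|)|x|^{\gamma-d})$. On the tail $\rho\gtrsim |x|^{-\delta}$ one integrates by parts against $J_\nu$ and uses the fast decay of $\psi$-independent derivatives, producing an $\mathcal{O}(1)$ remainder. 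Assembling these three estimates with the correct quantitative slow-variation control is the technical crux of the argument.
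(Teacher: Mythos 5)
First, a structural point: this theorem is not proved in the paper at all. It is imported verbatim from Wainger's memoir (\cite{Wainger}, Theorem~1, page~8) and used as a black box in the construction in section~\ref{Proof of Cowling-Price theorem:  Only if part}. There is therefore no internal proof in the paper for your sketch to be compared against, and I can only assess the sketch on its own terms.

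Your overall strategy---reducing the radial multiplier to a one-dimensional Hankel integral, integrating by parts against the Bessel kernel for the large-$|x|$ decay, and combining scaling with slow variation of $b$ for the small-$|x|$ asymptotics---is indeed the standard route for such singular radial multipliers, and is in the same spirit as Wainger's own stationary-phase arguments. However, the justification offered for part~(2) contains a genuine error. The claim that ``the amplitude $\psi(r)r^{-\gamma}b(r)\mathrm{e}^{-\epsilon r}$ together with all its derivatives is smooth on $[1/2,\infty)$ and absolutely integrable uniformly in $\epsilon\geq 0$'' is false: at $\epsilon=0$ the amplitude behaves like $r^{-\gamma}\log^{-1/2}r$ for large $r$, which fails to be integrable on $[1/2,\infty)$ unless $\gamma>1$, and for $\gamma\leq 0$ (permitted by the hypothesis $\gamma<d$) it does not even decay. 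There is no integrable dominating function for the family $\{e^{-\epsilon r}\cdot(\text{amplitude})\}_{\epsilon>0}$. What actually rescues the argument is that after $k$ integrations by parts the integrand is of order $r^{d/2-\gamma-k}J_{(d-2)/2+k}(2\pi r|x|)$, and using the Bessel decay $J_\nu(s)=\mathcal{O}(s^{-1/2})$ this is absolutely convergent once $k>d/2-\gamma+1/2$, uniformly in $\epsilon\geq 0$ since $e^{-\epsilon r}\leq 1$. One thereby gets $|F_\epsilon(x)|=\mathcal{O}(|x|^{-k})$ for every sufficiently large $k$, which suffices. The fix is local, but the reasoning as written is wrong: the crux is that one must integrate by parts \emph{enough times before} absolute convergence sets in, not that the starting amplitude is already integrable. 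The sketch of parts (3)--(4) is plausible but remains heuristic; to be rigorous you would also need to justify the formal substitution $r=\rho/|x|$ in an integral that does not converge absolutely, which requires a regularization or a stationary-phase decomposition before rescaling.
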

To apply this theorem, we define our $f$ via
\begin{equation*}
	f(x):=F(x)=\lim_{\epsilon\rightarrow0^+}F_\epsilon(x),\text{ with }\gamma=\frac{d}{2}.
\end{equation*}
However, the function $f$ may not in Schwartz class. This is a minor problem since we can approximate $f$ by elements in $\mathcal{S}(\mathbb{R}^d)$ via a density argument.
\paragraph{The function $f$ is not $L^2$ integrable:} When $x$ is near $0$, function $f$ behaves like
\begin{equation*}
	\frac{1}{|x|^{\frac{d}{2}}\log^\frac{1}{2}(\frac{1}{|x|}+2)}\sim\frac{1}{|x|^{\frac{d}{2}}\log^\frac{1}{2}(\frac{1}{|x|})},\text{ as }x\rightarrow0.
\end{equation*}
An easy calculation yields that the function
\begin{equation*}
	x\mapsto\frac{1}{|x|^{\frac{d}{2}}\log^\frac{1}{2}(|x|^{-1})}
\end{equation*}
is not $L^2$ integrable near $0$. So $f\notin L^2(\mathbb{R}^d)$.
\paragraph{The function $|x|^\theta|f|$ is $L^p$ integrable:}We notice that singularity appears only when $x\rightarrow0$ and $|x|\rightarrow\infty$, so we only need to prove the $L^p$-integrability of $|x|^\theta f$ when $x$ is near $0$, and when $|x|$ is large. When $x$ is near $0$, similar to the proof of $f\notin L^2$, the function $|x|^\theta|f|$ behaves like
\begin{equation*}
	\frac{1}{|x|^{\frac{d}{2}-\theta}\log^\frac{1}{2}(\frac{1}{|x|})},\text{ as }x\rightarrow0.
\end{equation*}
Taking $L^p$-norm, and using the hypothesis $\frac{\theta}{d}=\frac{1}{2}-\frac{1}{p}$, we obtain an integral
\begin{equation}\label{proof of L^A integrability at zero}
	\int_{x\in\mathbb{R}^d:|x|\ll1}\frac{\mathrm{d}x}{|x|^d\log^\frac{p}{2}(\frac{1}{|x|})}
\end{equation}
Here $|x|\ll1$ means that $|x|$ is much less than $1$. Notice that $\frac{\theta}{d}=\frac{1}{2}-\frac{1}{p}$ and $\theta>0$, we must have $p>2$. Therefore, (\ref{proof of L^A integrability at zero}) is finite, and $|x|^\theta f$ is $L^p$ integrable near $x=0$.
\par The $L^p$ integrability of $|x|^\theta f$ when $|x|\rightarrow\infty$ is easier. By Wainger's theorem, for every integer $r$, as $|x|\rightarrow\infty$,
\begin{equation*}
	|F_\epsilon(x)|=\mathcal{O}(|x|^{-r}), \text{ uniformly in } \epsilon\geq0.
\end{equation*}
So $f(x)=\mathcal{O}(|x|^{-r})$ also holds. Just take $r$ large, we see that $|x|^\theta|f|$ is $L^p$ integrable when $|x|\gg1$. Here $|x|\gg1$ means that $|x|$ is much larger than $1$.
\paragraph{}Finally, we prove $|\xi|^\phi\hat{f}$ is $L^q$ integrable. If we can prove $f\in L^1$, then by Wainger's theorem
\begin{equation*}
 \hat{f}(\xi)=\frac{\psi(|\xi|)}{|\xi|^\frac{d}{2}\log^\frac{1}{2}(|\xi|+2)}.
\end{equation*}
The function $|\xi|^\phi\hat{f}\in L^q$ is similar to the proof of $|x|^\theta f\in L^p$. Notice that $|\xi|^\phi\hat{f}$ has no singularity at $x=0$, since $\psi(t)=0$ when $t\leq1/2$. So it suffices to prove the $L^1$-integrability of $f$. When $x$ is near $0$, function $f$ behaves like
\begin{equation*}
	\frac{1}{|x|^\frac{d}{2}\log^\frac{1}{2}\Big(\frac{1}{|x|}\Big)},
\end{equation*}
and it's easy to show that
\begin{equation*}
	\int_{x\in\mathbb{R}^d:|x|\ll1}	\frac{\mathrm{d}x}{|x|^\frac{d}{2}\log^\frac{1}{2}\Big(\frac{1}{|x|}\Big)}<\infty.
\end{equation*}
When $|x|\gg1$, the $L^1$-integrability of $f$ is easier, just notice that $f(x)=\mathcal{O}(|x|^{-r})$ for every $r\in\mathbb{N}$.

\subsubsection{The case  $	\frac{\theta}{d}<\frac{1}{2}-\frac{1}{p}$, and $\frac{\phi}{d}<\frac{1}{2}-\frac{1}{q}.$}
We're going to use the Rudin-Shapiro construction, which is similar to the 1-dimensional case in \cite{Cowling-Price} (see \cite{Cowling-Price}, counterexample II). To avoid complicated notations, we only focus on the case $d=2$. After proving the two-dimensional case, we will give a brief explanation on general $d$.
\par Choose a non-negative smooth function $f$ which is supported in $[1/10,9/10]^2$ and $||f||_\infty=1$, we define $\{f_k\}^\infty_{k=0},\{g_k\}^\infty_{k=0}$, $\{p_k\}^\infty_{k=0},\{q_k\}^\infty_{k=0}$ inductively:
\begin{itemize}
	\item When $k=0$, define $f_0=g_0=p_0=q_0:=f$.
	\item Assume that the four sequences have been defined for $0,1,...,k$, now we define
	\begin{equation*}
		\begin{aligned}
			f_{k+1}&:=f_k+\Big[g_k(x-(2^k,0))+p_k(x-(0,2^k))+q_k(x-(2^k,2^k))\Big]\\
			g_{k+1}&:=f_k+\Big[g_k(x-(2^k,0))-p_k(x-(0,2^k))-q_k(x-(2^k,2^k))\Big]\\
			p_{k+1}&:=f_k-\Big[g_k(x-(2^k,0))-p_k(x-(0,2^k))+q_k(x-(2^k,2^k))\Big]\\
			q_{k+1}&:=f_k-\Big[g_k(x-(2^k,0))+p_k(x-(0,2^k))-q_k(x-(2^k,2^k))\Big]\\
		\end{aligned}
	\end{equation*}
\end{itemize}
Notice that
\begin{equation*}
	\begin{aligned}
		\widehat{f_{k+1}}&=\widehat{f_k}+\mathrm{e}^{-2\pi\mathrm{i}2^k\xi_1}\widehat{g_k}+\mathrm{e}^{-2\pi\mathrm{i}2^k\xi_2}\widehat{p_k}+\mathrm{e}^{-2\pi\mathrm{i}2^k(\xi_1+\xi_2)}\widehat{q_k}\\
		\widehat{g_{k+1}}&=\widehat{f_k}+\mathrm{e}^{-2\pi\mathrm{i}2^k\xi_1}\widehat{g_k}-\mathrm{e}^{-2\pi\mathrm{i}2^k\xi_2}\widehat{p_k}-\mathrm{e}^{-2\pi\mathrm{i}2^k(\xi_1+\xi_2)}\widehat{q_k}\\
		\widehat{p_{k+1}}&=\widehat{f_k}-\mathrm{e}^{-2\pi\mathrm{i}2^k\xi_1}\widehat{g_k}+\mathrm{e}^{-2\pi\mathrm{i}2^k\xi_2}\widehat{p_k}-\mathrm{e}^{-2\pi\mathrm{i}2^k(\xi_1+\xi_2)}\widehat{q_k}\\
		\widehat{q_{k+1}}&=\widehat{f_k}-\mathrm{e}^{-2\pi\mathrm{i}2^k\xi_1}\widehat{g_k}-\mathrm{e}^{-2\pi\mathrm{i}2^k\xi_2}\widehat{p_k}+\mathrm{e}^{-2\pi\mathrm{i}2^k(\xi_1+\xi_2)}\widehat{q_k}\\
	\end{aligned}
\end{equation*}
By parallelogram law, for $a,b,c,d\in\mathbb{C}$
\begin{equation}\label{parallelogram law for 4 numbers}
	|a+b+c+d|^2+|a+b-c-d|^2+|a-b+c-d|^2+|a-b-c+d|^2=4(|a|^2+|b|^2+|c|^2+|d|^2).
\end{equation}
Let $(a,b,c,d)=\Big(\widehat{f_k},\ \mathrm{e}^{-2\pi\mathrm{i}2^k\xi_1}\widehat{g_k},\ \mathrm{e}^{-2\pi\mathrm{i}2^k\xi_2}\widehat{p_k},\ \mathrm{e}^{-2\pi\mathrm{i}2^k(\xi_1+\xi_2)}\widehat{q_k}\Big)$, we get
\begin{equation}\label{Rudin-Shapiro fourier side}
	|\widehat{f_{k+1}}|^2+	|\widehat{g_{k+1}}|^2+	|\widehat{p_{k+1}}|^2+	|\widehat{q_{k+1}}|^2=4\Big(	|\widehat{f_{k}}|^2+	|\widehat{g_{k}}|^2+	|\widehat{p_{k}}|^2+	|\widehat{q_{k}}|^2\Big).
\end{equation}
Furthermore, the function $f_k$ is supported in $[0,2^k]^2$. When $0\leq m,n\leq2^k-1$, 
\begin{equation*}
	f_k\Big|_{[m,m+1]\times[n,n+1]}=\pm f(x-(m,n)),
\end{equation*}
that is to say, in every cubes $Q_{m,n}:=[m,m+1]\times[n,n+1]\subset[0,2^k]^2$, function $f_k$ is obtained, up to a sign, via a translation of $f$ along the vector $(m,n)$. Therefore,
\begin{equation}\label{L^2 norm of function f_k}
	||f_k||_2^2=2^{2k}||f||_2^2,\text{ and }||f_k||_\infty=||f||_\infty=1.
\end{equation}
\paragraph{}Now we calculate the $L^p$ norm of $|x|^\theta f_k$:
\begin{equation*}
	\begin{aligned}
		\bigg(\int_{\mathbb{R}^d}|x|^{\theta p}|f_k|^p\mathrm{d}x \bigg)^\frac{1}{p}=&\bigg(\int_{[0,2^k]^2}|x|^{\theta p}|f_k|^p\mathrm{d}x \bigg)^\frac{1}{p}\\
		\leq&||f_k||_\infty\bigg(\int_{[0,2^k]^2}|x|^{\theta p}\mathrm{d}x \bigg)^\frac{1}{p}\\
		\leq&\bigg(\int_{|x|\leq\sqrt{d}2^k}|x|^{\theta p}\mathrm{d}x \bigg)^\frac{1}{p}\\
		\leq& C(d,\theta,p)\cdot 2^{k(\frac{d}{p}+\theta)}.
	\end{aligned}
\end{equation*}
Here $C(d,\theta,p)$ is a constant independent of $k$, here $d=2$.
\paragraph{}By (\ref{Rudin-Shapiro fourier side}), and recall $f_0=g_0=p_0=q_0=f$, we have
\begin{equation}\label{Control of fourier transform of f_k}
	|\widehat{f_{k}}|^2\leq	|\widehat{f_{k}}|^2+	|\widehat{g_{k}}|^2+|\widehat{p_{k}}|^2+|\widehat{q_{k}}|^2=4^{k+1}|\hat{f}|^2,
\end{equation}
which implies that $|\widehat{f_k}|\leq 2^{k+1}|\hat{f}|$. The $L^q$ norm of $|\xi|^\phi\hat{f}$ can be controlled:
\begin{equation*}
	\bigg(\int_{\mathbb{R}^d}|\xi|^{\phi q}|\widehat{f_{k}}|^q\mathrm{d}\xi\bigg)^\frac{1}{q}\leq2^{k+1}\bigg(\int_{\mathbb{R}^d}|\xi|^{\phi q}|\widehat{f}|^q\mathrm{d}\xi\bigg)^\frac{1}{q}=2^{k+1}\Big|\Big||\xi|^\phi\hat{f}\Big|\Big|_q.
\end{equation*}
\paragraph{}Now we test (\ref{Cowling-Price inequalitty, d-dimension}) by $f_k$, recall that $||f_k||_2^2=2^{2k}||f||^2_2$.
\begin{equation}\label{Cowling-Price counterexample for f_k}
	\begin{aligned}
	C\cdot2^{2k}||f||^2_2\leq\Big|\Big||x|^\theta f_k\Big|\Big|_p\cdot\Big|\Big||\xi|^\phi\widehat{f_k}\Big|\Big|_q\leq2^{k+1}\Big|\Big||\xi|^\phi\hat{f}\Big|\Big|_q\cdot  C(d,\theta,p)\cdot 2^{k(\frac{d}{p}+\theta)}.
\end{aligned}
\end{equation}
After a simplification, inequality (\ref{Cowling-Price counterexample for f_k}) is reduced to
\begin{equation}\label{Cowling-Price counterexample for f_k:reduced form}
	2^k\leq C_{d,\theta,p,\phi,q,f}\cdot2^{k(\frac{d}{p}+\theta)}.
\end{equation}
The constant $C_{d,\theta,p,\phi,q,f}$ is independent of $k$, and the above inequality should hold for all $k$. This is impossible, because when $d=2$,
\begin{equation*}
\text{the hypothesis}\quad\frac{\theta}{d}<\frac{1}{2}-\frac{1}{p}\quad\text{implies}\quad\frac{d}{p}+\theta<1.
\end{equation*}
Let $k$ goes to infinity will give a contradiction. Therefore, the inequality (\ref{Cowling-Price inequalitty, d-dimension}) can not hold for $f_k$ when $k$ is large.
\paragraph{}For general $d\geq1$, we need to construct $2^d$ families of functions
\begin{equation*}
	\{f_{1,k}\}_{k=0}^\infty,\ \{f_{2,k}\}_{k=0}^\infty,...,\ \{f_{2^d,k}\}_{k=0}^\infty.
\end{equation*}
One may think, when $d=2$,
\begin{equation*}
	\begin{aligned}
			\{f_{1,k}\}_{k=0}^\infty&=\{f_k\}_{k=0}^\infty,\  \{f_{2,k}\}_{k=0}^\infty=\{g_k\}_{k=0}^\infty,\\ 	\{f_{3,k}\}_{k=0}^\infty&=\{p_k\}_{k=0}^\infty,\  \{f_{4,k}\}_{k=0}^\infty=\{q_k\}_{k=0}^\infty.\
	\end{aligned}
\end{equation*}
The reason for using $2^d$ families is because a cube in $\mathbb{R}^d$, for example, $[0,2)^d$ has $2^d$ dyadic children. Each of its children has the form
\begin{equation*}
	I_1\times I_2\times\cdots\times I_d,\text{ where }I_i=[0,1)\text{ or }[1,2).
\end{equation*}
Another ingredient is a parallelogram law for $2^d$ numbers.
\begin{prop}\label{Proposition: parallelogram law for 2^d numbers}
For every $d\geq1$, there exists a choice of the signs $\{\epsilon_{i,j}\}_{1\leq i\leq2^d,\\2\leq j\leq2^d}$(notice that $i$ starts from $1$, while $j$ starts from $2$), with $\epsilon_{i,j}=\pm1$ for all $i,j$, so that for every $a_1,a_2,...,a_{2^d}\in\mathbb{C}$, we have
	\begin{equation}\label{parallelogram law for 2^d numbers}
		\Big|a_1+\epsilon_{1,2}a_2+\cdots+ \epsilon_{1,2^d}a_{2^d}\Big|^2+\cdots+\Big|a_1+\epsilon_{2^d,2}a_2+ \cdots+ \epsilon_{2^d,2^d}a_{2^d}\Big|^2=2^d(|a_1|^2+|a_2|^2+\cdots|a_{2^d}|^2).
	\end{equation}
\end{prop}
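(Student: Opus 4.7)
The plan is to read the signs $\epsilon_{i,j}$ off the rows of a Hadamard matrix of order $2^d$ and then deduce (\ref{parallelogram law for 2^d numbers}) from the orthogonality of those rows. Recall that a Hadamard matrix $H$ of order $n$ is an $n\times n$ matrix with entries in $\{\pm 1\}$ satisfying $HH^T=nI$. The identity (\ref{parallelogram law for 2^d numbers}) for $d=1$ is the classical parallelogram law coming from $H_2=\begin{pmatrix}1&1\\ 1&-1\end{pmatrix}$, and the case $d=2$ treated in (\ref{parallelogram law for 4 numbers}) comes from the $4\times 4$ Hadamard matrix $H_4=H_2\otimes H_2$.

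First, I will construct $H_{2^d}$ for every $d\geq 1$ by the Sylvester recursion
\begin{equation*}
H_{2^{d+1}}=\begin{pmatrix}H_{2^d}&H_{2^d}\\ H_{2^d}&-H_{2^d}\end{pmatrix}, \qquad H_2=\begin{pmatrix}1&1\\ 1&-1\end{pmatrix}.
\end{equation*}
A straightforward induction shows that $H_{2^d}$ has all entries in $\{\pm 1\}$, that $H_{2^d}H_{2^d}^T=2^d I$, and that its first column is the all-ones vector. (If one preferred to start from any Hadamard matrix, one could simply multiply each row by the sign of its first entry; this does not destroy the orthogonality of rows and arranges the first column to be $(1,1,\ldots,1)^T$.) Define $\epsilon_{i,j}$ for $1\leq i\leq 2^d$ and $2\leq j\leq 2^d$ to be the $(i,j)$-entry of $H_{2^d}$; by construction $\epsilon_{i,j}\in\{\pm 1\}$, and the entries in the first column are all $+1$, which matches the normalization in the statement.

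With this choice the left-hand side of (\ref{parallelogram law for 2^d numbers}) becomes $\sum_{i=1}^{2^d}\bigl|\sum_{j=1}^{2^d}H_{i,j}\,a_j\bigr|^2$. Expanding the square and exchanging the order of summation gives
\begin{equation*}
\sum_{j,k=1}^{2^d}\Bigl(\sum_{i=1}^{2^d}H_{i,j}H_{i,k}\Bigr)\,a_j\overline{a_k}
=\sum_{j,k=1}^{2^d}(H^TH)_{j,k}\,a_j\overline{a_k}
=2^d\sum_{j=1}^{2^d}|a_j|^2,
\end{equation*}
where the last equality uses $H^TH=2^d I$ (equivalent to $HH^T=2^d I$ for a square matrix). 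This is exactly the right-hand side of (\ref{parallelogram law for 2^d numbers}), so the proof is complete.

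I do not expect a genuine obstacle here: the only subtle point is producing a Hadamard matrix whose first column is all $+1$'s (so that the coefficient of $a_1$ in each of the $2^d$ summands is $+1$, matching the statement), and this is handled automatically by the Sylvester construction. Once the Hadamard matrix is in hand, (\ref{parallelogram law for 2^d numbers}) is a one-line consequence of the row-orthogonality relation $HH^T=2^d I$.
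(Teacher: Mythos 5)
Your proof is correct, and it takes a more structural route than the paper does. The paper proves the identity by induction on $d$: starting from the ordinary parallelogram law at $d=1$, each of the $2^d$ squared terms at level $d$ is split into two squared terms at level $d+1$ by one more application of the parallelogram law. You instead identify the combinatorial object underneath — a Sylvester--Hadamard matrix $H_{2^d}$ with all-ones first column — and deduce the identity in one stroke from row orthogonality $H^TH=2^dI$. These are really two faces of the same construction (the Sylvester recursion $H_{2^{d+1}}=\begin{pmatrix}H_{2^d}&H_{2^d}\\H_{2^d}&-H_{2^d}\end{pmatrix}$ is precisely the paper's ``define two new terms from each old term'' step), but your packaging has two advantages: it replaces a nested induction with a single quadratic-form computation, and it makes clear that the Sylvester matrix is not special --- any Hadamard matrix of order $2^d$, with rows normalized so the first column is $(1,\dots,1)^T$, yields a valid choice of signs. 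The paper's approach, in exchange, is self-contained and uses nothing beyond the $d=1$ parallelogram law, which may be why the author preferred it. One small point worth keeping in your write-up: the normalization of the first column (which you handle via the Sylvester construction, or by flipping row signs) is essential, since the statement fixes the coefficient of $a_1$ to be $+1$ in every summand; you flagged this correctly.
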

\begin{proof}
	We prove it by induction, when $d=1$, this is just the ordinary parallelogram law 
	\begin{equation*}
		|a_1+a_2|^2+|a_1-a_2|^2=2(|a_1|^2+|a_2|^2).
	\end{equation*}
Also, we already saw the case $d=2$ in (\ref{parallelogram law for 4 numbers}). Assume that the proposition is true for $1,2,...,d$, now we prove the case $d+1$. The left hand side of (\ref{parallelogram law for 2^d numbers}) has exactly $2^d$ terms, and we may write it as
\begin{equation}\label{parallelogram law for 2^d numbers, a form of general term}
	\sum_{i=1}^{2^d}\Big|a_1+\epsilon_{i,2}a_2+\cdots+ \epsilon_{i,2^d}a_{2^d}\Big|^2=\sum_{i=1}^{2^d}\Big|a_1+\sum_{j=2}^{2^d}\epsilon_{i,j}a_j\Big|^2.
\end{equation}
For each term $|a_1+\epsilon_{i,2}a_2+\cdots+ \epsilon_{i,2^d}a_{2^d}|^2$ in (\ref{parallelogram law for 2^d numbers, a form of general term}), we define \textbf{two} new terms
\begin{equation}\label{parallelogram law for 2^{d+1} numbers, a form of general term}
	\begin{aligned}
	&\Big|(a_1+\epsilon_{i,2}a_2+\cdots+ \epsilon_{i,2^d}a_{2^d})+(a_{2^d+1}+\epsilon_{i,2}a_{2^d+2}\cdots+ \epsilon_{i,2^d}a_{2^{d+1}})\Big|^2+\\	&\Big|(a_1+\epsilon_{i,2}a_2+\cdots+ \epsilon_{i,2^d}a_{2^d})-(a_{2^d+1}+\epsilon_{i,2}a_{2^d+2}\cdots+ \epsilon_{i,2^d}a_{2^{d+1}})\Big|^2.
\end{aligned}
\end{equation}
This is exactly our choice of the signs in $d+1$ case and finishes the proof. In fact, by parallelogram law,
\begin{equation*}
	(\ref{parallelogram law for 2^{d+1} numbers, a form of general term})=2\Big(|a_1+\epsilon_{i,2}a_2+\cdots+ \epsilon_{i,2^d}a_{2^d}|^2+|a_{2^d+1}+\epsilon_{i,2}a_{2^d+2}\cdots+ \epsilon_{i,2^d}a_{2^{d+1}}|^2\Big).
\end{equation*}
 So after taking a sum, we get
\begin{equation*}
	\begin{aligned}
	\sum^{2^d}_{i=1}(\ref{parallelogram law for 2^{d+1} numbers, a form of general term})=&2\times	\sum^{2^d}_{i=1}	|a_1+\epsilon_{i,2}a_2+\cdots+ \epsilon_{i,2^d}a_{2^d}|^2+2\times	\sum^{2^d}_{i=1}	|a_{2^d+1}+\epsilon_{i,2}a_{2^d+2}\cdots+ \epsilon_{i,2^d}a_{2^{d+1}}|^2\\
	=&2\cdot2^d(|a_1|^2+\cdots+|a_{2^d}|^2)+2\cdot2^d(|a_{2^d+1}|^2+\cdots+|a_{2^{d+1}}|^2)\\
	=&2^{d+1}(|a_1|^2+\cdots+|a_{2^{d+1}}|^2).
	\end{aligned}
\end{equation*}
Here, in the second-to-last equality, we used the induction hypothesis for $d$.
\end{proof}

\par Choose a smooth function $f$ supported in $[1/10,9/10]^d$ and $||f||_\infty=1$. We define, inductively,
\begin{itemize}
	\item When $k=0$, the functions $f_{1,0}=f_{2,0}=...=f_{2^d,0}:=f$, 
	\item Assume that $2^d$ sequences have been defined for $0,1,...,k$, now we define
		\begin{equation*}
		\begin{aligned}
			f_{i,k+1}(x)&:=f_{1,k}(x)+\sum_{j=2}^{2^d}\epsilon_{i,j}f_{j,k}(x-\mathbf{x_{j,k}}),\text{ where }1\leq i\leq 2^d.
		\end{aligned}
	\end{equation*}
As $j$ runs over $2$ to $2^d$, the point $\mathbf{x_{j,k}}\in\mathbb{R}^d$ runs over all $2^d-1$ integer points of the form
\begin{equation*}
	\mathbf{x_{j,k}}=(x_1,x_2,...,x_d),\text{ each }x_i=0\text{ or }2^k, \text{ and at least one }x_i\neq0.
\end{equation*}
\end{itemize} 
It's easy to see when $d=2$, our $\mathbf{x_{j,k}}$ are exactly the three points $(2^k,2^k)$, $(0,2^k)$, and $(2^k,0)$. The subsequent calculation is the same as for $d=2$, so we omit it here. Just replace (\ref{L^2 norm of function f_k}) and (\ref{Control of fourier transform of f_k}) by
\begin{equation*}
	||f_{1,k}||^2_2=2^{dk}||f||_2^2,\text{ and }|\widehat{f_{1,k}}|^2\leq 2^{d(k+1)}|\hat{f}|^2
\end{equation*}
respectively.

\end{document}